\newif\ifpreprint
\newcommand{\fm}{f_{\bf m}}
\newcommand{\fn}{f_{\bf n}}
\newcommand{\Gm}{G_{\bf m}}
\newcommand{\Gn}{G_{\bf n}}
\newcommand{\hm}{h_{\bf m}}
\newcommand{\hn}{h_{\bf n}}
\newtheorem{theorem}{Theorem}
\newtheorem{lemma}{Lemma}
\newtheorem{definition}{Definition}
\newtheorem{proposition}{Proposition}
\newtheorem{corollary}{Corollary}
\newcommand{\R}{\mathbb{R}}
\newcommand{\T}{\mathrm{T}}
\title{Learning Deep Input-Output Stable Dynamics}
\author{%
  Ryosuke Kojima\thanks{Equal contribution.}\\
  Graduate School of Medicine\\
  Kyoto University\\
  Kyoto, 606-8501\\
  \texttt{kojima.ryosuke.8e@kyoto-u.ac.jp} \\
   \And
  Yuji Okamoto$^*$\\
  Graduate School of Medicine\\
  Kyoto University\\
  Kyoto, 606-8501\\
  \texttt{okamoto.yuji.2c@kyoto-u.ac.jp} \\
}
\begin{document}

\maketitle

\begin{abstract}
Learning stable dynamics from observed time-series data is an essential problem in robotics, physical modeling, and systems biology.
Many of these dynamics are represented as an inputs-output system to communicate with the external environment.
In this study, we focus on input-output stable systems, exhibiting robustness against unexpected stimuli and noise.
We propose a method to learn nonlinear systems guaranteeing the input-output stability.
Our proposed method utilizes the differentiable projection onto the space satisfying the Hamilton-Jacobi inequality to realize the input-output stability.
The problem of finding this projection can be formulated as a quadratic constraint quadratic programming problem, and we derive the particular solution analytically.
Also, we apply our method to a toy bistable model and the task of training a benchmark generated from a glucose-insulin simulator.
The results show that the nonlinear system with neural networks by our method achieves the input-output stability, unlike naive neural networks.
Our code is available at \url{https://github.com/clinfo/DeepIOStability}.

\end{abstract}


\section{Introduction}
Learning dynamics from time-series data has many applications such as industrial robot systems  \cite{swevers2007dynamic}, physical systems\cite{brunton2016discovering}, and biological systems \cite{gardner2003inferring,Roeder2019}.
Many of these real-world systems equipped with inputs and outputs to connect for each other, which are called {\it input-output systems} \cite{Khalil2002}.
For example, biological systems sustain life by obtaining energy from the external environment through their inputs.
Such real-world systems have various properties such as stability, controllability, and observability, which provide clues to analyze the complex systems.

Our purpose is to learn a complex system with ``desired properties'' from a dataset consisting of pairs of input and output signals.
To represent the target system, this paper considers the following nonlinear dynamics:
\begin{align}
\begin{aligned}
\dot{x} &= f(x) + G(x)u , \quad x(0) = x_0\\
y &= h(x).
\end{aligned}\label{Eq:main_system}
\end{align}
where the inner state $x$, the input $u$, and the output $y$ belong to a signal space that maps time interval $[0,\infty)$ to the Euclidean space.
We denote the dimension of $x$, $u$, and $y$ as $n$, $m$, and $l$, respectively.

Recently, with the development of deep learning, many methods to learn systems from time-series data using neural networks have been proposed \cite{krishnan2016structured,chen2018neural,zhong2019symplectic}.
By representing the maps $(f,G,h)$ in Eq.~(\ref{Eq:main_system}) as neural networks, complex systems can be modeled and trained from a given dataset.
However, guaranteeing that a trained system has the desired properties is challenging.

\begin{figure}[t]
    \centering
    \begin{tabular}{cc}
    \includegraphics[width=0.4\linewidth]{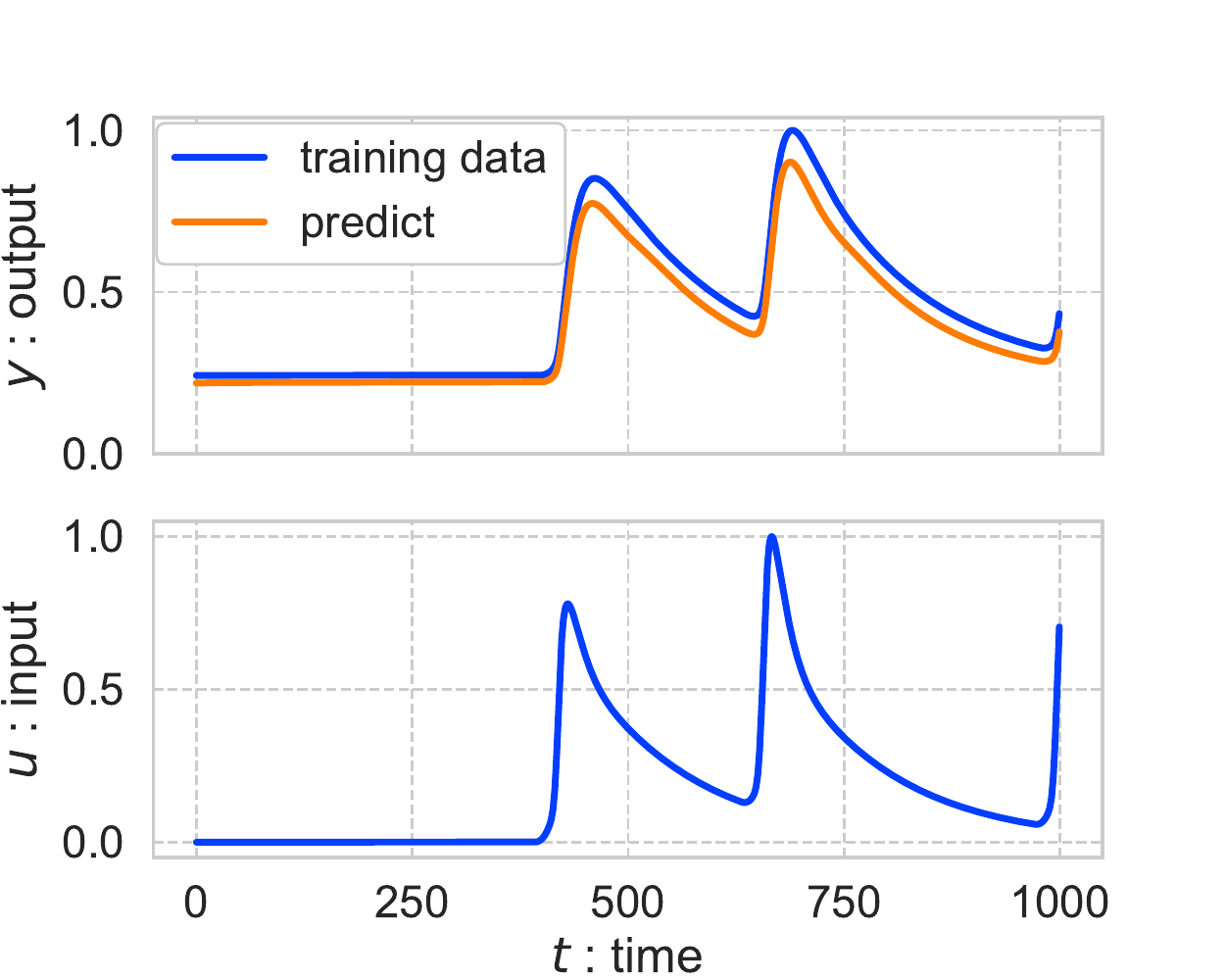}
    & \includegraphics[width=0.4\linewidth]{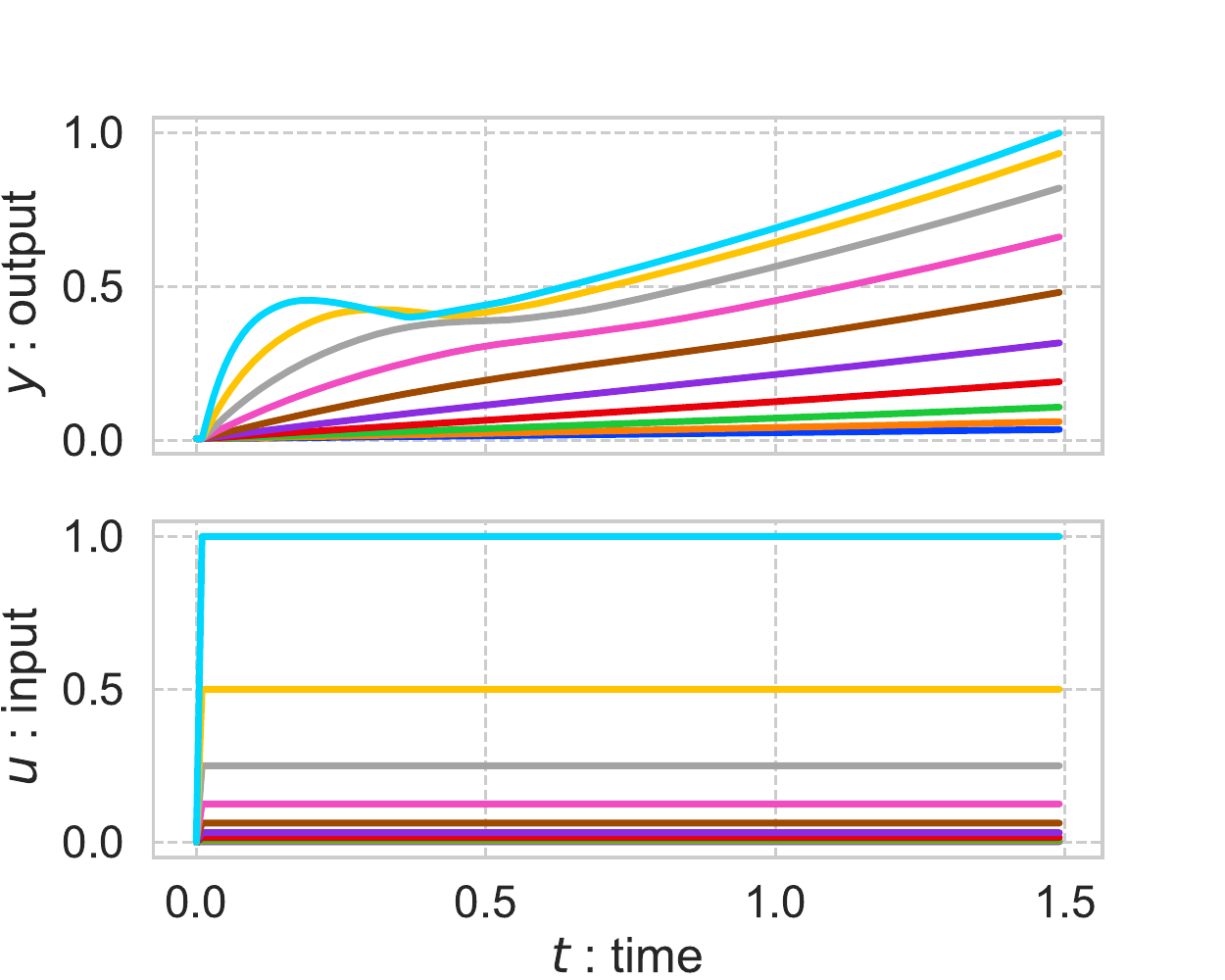}\\
        \text{(A)}&    \text{(B)} 
    \end{tabular}
    \caption{(A) The model prediction of neural networks  (B) the reaction of trained model. These results are min-max normalized.}
    \label{fig:introduction}
\end{figure}

A naively trained system fits the input signals contained in the training dataset, but does not always fit for new input signals.
For example, Figure~\ref{fig:introduction} shows our preliminary experiments
where we naively learned neural networks $(f,G,h)$ in Eq.~(\ref{Eq:main_system}) from input and output signals.
The trained neural networks provided small predictive errors for an input signal in the training dataset (Figure~\ref{fig:introduction} (A) ).
Contrastingly, the unbounded output signals were computed by the trained system with new step input signals  (Figure~\ref{fig:introduction} (B) ).
The reason can be expected that the magnitude (integral value) of this input signal was larger than that of the input signals in the training dataset.

The internal stability is known as one of the attractive properties that should be often satisfied in real-world dynamical systems.
The conventional methods to train internal stable systems consisting of neural networks adopt the Lyapunov-based approaches \cite{Manek2019,Lawrence2020,schlaginhaufen2021learning}.
These methods focus on the internal system, $\dot{x} = f(x)$ in the input-output system~(\ref{Eq:main_system}).
Thus, how to learn the entire  system~(\ref{Eq:main_system}) with the desired property related to the influence of the input signal is still challenging.

We propose a novel method to learn a dynamical system consisting of neural networks considering the input-output stability.
The notion of the input-output stability is often used together with the Hamilton-Jacobi inequality for controller design of the target system in the field of control theory.
The Hamilton-Jacobi inequality is one of the sufficient conditions for the input-output stability.
The feature of this condition is that the variable for an input signal $u$ does not appear in the expression, i.e., we do not need to evaluate the condition for unknown inputs \cite{2016Schaft}.
To the best of our knowledge, this is the first work that establishes a learning method for the dynamical systems consisting of neural networks using the Hamilton-Jacobi inequality.

The contributions of this paper are as follows:
\begin{itemize}
  \item This paper derives differentiable projections to the space satisfying the Hamilton-Jacobi inequality.
  \item This paper presents the learning method for the input-output system proven to always satisfy the Hamilton-Jacobi inequality.
  \item This paper also provides a loss function derived from the Hamilton-Jacobi inequality. By combining this loss function with the projection described above, efficient learning can be expected.
  \item This paper presents experiments using two types of benchmarks to evaluate our method.
\end{itemize}


\section{Background}
This section describes the $\mathcal{L}_2$ stability, a standard definition of the input-output stability, and the Hamilton-Jacobi inequality.

First, we define the $\mathcal{L}_2$ stability of the nonlinear dynamical system~(\ref{Eq:main_system}).
If the norm ratio of the output signal to the input signal is bounded, the system is $\mathcal{L}_2$ stable and this norm ratio is called the $\mathcal{L}_2$ gain.
The $\mathcal{L}_2$ norm on the input and output signal space is used for the definition of the $\mathcal{L}_2$ stability.
To deal with the $\mathcal{L}_2$ stability on the nonlinear system~(\ref{Eq:main_system}), we assume that the origin $x\equiv 0$ of the internal system $\dot{x} = f(x)$ is an asymptotically stable equilibrium point and $h(0) = 0$.
Since the translation of any asymptotically stable equilibrium points is possible, this assumption can be satisfied without loss of generality.

\begin{definition}
The $\mathcal{L}_2$ norm is defined as 
$\|x\|_{\mathcal{L}_2} :=  \sqrt{\int_{0}^\infty \|x(t)\|^2 \dd t}$.
If there exists $\gamma \geq0$ and a function $\beta$ on a domain $D\subset \R^n$ such that
\begin{align}
 \|y\|_{\mathcal{L}_2}\leq \gamma \|u\|_{\mathcal{L}_2}+ \beta(x_0), \label{Eq:L2_Stability}
\end{align}
then the system (\ref{Eq:main_system}) is $\mathcal{L}_2$ stable, where the function $\beta(\cdot)$ is non-negative and $\beta(0)=0$.
Furthermore, the minimum $\gamma$ satisfying (\ref{Eq:L2_Stability}) is called the $\mathcal{L}_2$ gain of the nonlinear system~(\ref{Eq:main_system}).
\end{definition}

Next, we describe a sufficient condition of the $\mathcal{L}_2$ stability using the Lyapunov function $V:D \rightarrow \R$, where the function $V$ is positive semi-definite, i.e., $V(x)\geq 0$ and $V(0) = 0$.
The Hamilton-Jacobi inequality is an input-independent sufficient condition.

\begin{proposition}[{\cite[Theorem 5.5]{Khalil2002}}]~\label{Prop:IO_Stability}
Let $f$ be locally Lipschitz and let $G$ and $h$ be continuous.
If there exist a constant $\gamma > 0$ and a continuously differentiable positive semi-definite function $V:D\subset \R^n\rightarrow \R$ such that 
\begin{align}
\begin{aligned}
&\nabla V^\T(x) f(x) + \frac{1}{2\gamma^2}\|G^\T(x)\nabla V(x)\|^2 + \frac{1}{2} \|h(x)\|^2\leq 0,\quad  \forall x\in D\setminus \{ 0\},
\end{aligned}
\label{Eq:HJI}
\end{align}
then the system (\ref{Eq:main_system}) is $\mathcal{L}_2$ stable and the $\mathcal{L}_2$ gain is less than or equal to $\gamma$.
This condition is called the Hamilton-Jacobi inequality.
\end{proposition}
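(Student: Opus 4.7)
The plan is to follow the classical Lyapunov-style argument for $\mathcal{L}_2$ stability, using the function $V$ supplied by the hypothesis as a storage function along trajectories. The key identity will come from completing the square in the cross term $\nabla V^\T G u$, which is exactly what allows the input-dependent bound to be extracted from the input-free condition (\ref{Eq:HJI}).

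First, I would differentiate $V(x(t))$ along a solution of (\ref{Eq:main_system}). Since $f$ is locally Lipschitz and $G, h$ are continuous, trajectories are well defined, and
\begin{align*}
\dot V(x(t)) \;=\; \nabla V^\T(x)\bigl(f(x) + G(x)u\bigr) \;=\; \nabla V^\T(x) f(x) + \nabla V^\T(x) G(x)\, u.
\end{align*}
Applying the Hamilton-Jacobi inequality (\ref{Eq:HJI}) to replace $\nabla V^\T f$ by an upper bound, and then completing the square in $u$ via the algebraic identity
\begin{align*}
\nabla V^\T G\, u \;-\; \tfrac{1}{2\gamma^2}\|G^\T\nabla V\|^2 \;=\; -\tfrac{1}{2\gamma^2}\bigl\|G^\T\nabla V - \gamma^2 u\bigr\|^2 + \tfrac{\gamma^2}{2}\|u\|^2,
\end{align*}
gives the pointwise dissipation inequality
\begin{align*}
\dot V(x(t)) \;\le\; -\tfrac{1}{2}\|h(x)\|^2 + \tfrac{\gamma^2}{2}\|u\|^2.
\end{align*}

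Next, I would integrate this inequality from $0$ to an arbitrary $T>0$. Using $V(x(T)) \ge 0$ (since $V$ is positive semi-definite) and $y = h(x)$, rearrangement yields
\begin{align*}
\tfrac{1}{2}\int_0^T \|y(t)\|^2\,\dd t \;\le\; V(x_0) + \tfrac{\gamma^2}{2}\int_0^T \|u(t)\|^2\,\dd t.
\end{align*}
Letting $T \to \infty$ and applying monotone convergence gives $\|y\|_{\mathcal{L}_2}^2 \le 2V(x_0) + \gamma^2 \|u\|_{\mathcal{L}_2}^2$. Finally, taking square roots and using the sub-additivity $\sqrt{a+b} \le \sqrt{a} + \sqrt{b}$ for $a,b \ge 0$ produces the desired bound (\ref{Eq:L2_Stability}) with $\beta(x_0) := \sqrt{2V(x_0)}$, which is non-negative and vanishes at $0$ since $V(0)=0$. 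The minimality step (that $\gamma$ is indeed an upper bound on the $\mathcal{L}_2$ gain) is then immediate from the definition.

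The routine calculations (differentiation, completion of square, integration) are standard; the only subtlety I would guard against is ensuring the trajectory $x(t)$ stays inside $D$ so that (\ref{Eq:HJI}) is applicable along the whole orbit. In the setting here this is handled by the same positive semi-definite $V$ serving as a storage function, since the dissipation inequality prevents $V(x(t))$ from growing unboundedly when $u \in \mathcal{L}_2$; alternatively, one can argue on the maximal interval of existence and extend. This is the one place where the proof is not purely algebraic, and I expect it to be the most delicate step to state cleanly without enlarging the hypotheses.
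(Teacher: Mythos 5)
Your proposal is correct, and it is essentially the same argument the paper relies on: the paper does not reprove this proposition but cites Khalil's Theorem 5.5, whose proof is exactly this storage-function computation — bound $\dot V$ along trajectories, complete the square in the cross term $\nabla V^\T G u$ to obtain $\dot V \le \tfrac{\gamma^2}{2}\|u\|^2 - \tfrac{1}{2}\|h(x)\|^2$, integrate, and take square roots to get (\ref{Eq:L2_Stability}) with $\beta(x_0)=\sqrt{2V(x_0)}$. Your closing caveat about keeping the trajectory in $D$ (or working on the maximal interval of existence) is the same qualification present in the cited source, so nothing essential is missing.
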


The above proposition can be generalized to allow the more complicated situations such as limit-cycle and bistable cases, where the domain $D$ contains multiple asymptotically stable equilibrium points.
The equilibrium point assumed in this proposition can be replaced with positive invariant sets by extending the $\mathcal{L}_2$ norm \cite{haddad2008}.
Furthermore, by mixing multiple Lyapunov functions, this proposition can be generalized around multiple isolated equilibrium points.

\section{Method}
The goal of this paper is to learn the $\mathcal{L}_2$ stable system represented by using neural networks $(f, G, h)$.
The Hamiltonian-Jacobi inequality, which implies the $\mathcal{L}_2$ stability, is expressed by $(f, G, h)$.
We present a method to project $(f,G,h)$ onto the space where the Hamilton-Jacobi inequality holds.

\subsection{Modification of nonlinear systems}
Supposing $\fn :\R^n\rightarrow \R^n$, $\Gn:\R^n\rightarrow \R^{n\times m}$, and $\hn:\R^n\rightarrow \R^{l}$, a triplet map $(\fn, \Gn, \hn)$ denote as {\it nominal} dynamics.
Introducing a distance on the triplet maps, the nearest triplet satisfying the Hamilton-Jacobi inequality from the nominal dynamics $(\fn,\Gn,\hn)$ is called modified dynamics $(\fm,\Gm,\hm)$.
The purpose of this section is to describe the modified dynamics $(\fm,\Gm,\hm)$ associated with the nominal dynamics $(\fn,\Gn,\hn)$ by analytically deriving a projection onto the space satisfying the Hamilton-Jacobi inequality.

The problem of finding the modified dynamics $(\fm,\Gm,\hm)$ is written as a quadratic constraint quadratic programming (QCQP) problem for the nominal dynamics $(\fn,\Gn,\hn)$.
Since there is generally no analytical solution for QCQP problems, we aim to find the particular solution by adjusting the distance on the triplets.

To prepare for the following theorem, we define the ramp and the clamp functions as
\begin{align*}
    \mathrm{R}(x) \triangleq 
    \begin{cases}
    0, &  x \leq 0\\
    x, & x>0
    \end{cases},\quad 
    \mathrm{C}(x;a,b)\triangleq
    \begin{cases}
    a, & x \leq a\\
    x, & a < x \leq b\\
    b, & x > b
    \end{cases},
\end{align*}
and define the Hamilton-Jacobi function as
\begin{align}
    \mathrm{HJ}(f,G,h) \triangleq \nabla V^\T f + \frac{1}{2\gamma^2}\| G^\T \nabla V  \|^2 + \frac{1}{2}\|h\|^2, \label{Eq:HJ_function}
\end{align}
where $V$ is a given positive definite function.
A way to design $V$ is to determine the desired positive invariant set and design the increasing function around this set. For example, if  the target system has a unique stable point, $x=0$ can be regarded as the positive invariant set and the increasing  function $V$ can be designed as $V(x) = \tfrac{1}{2}x^2$.

\begin{theorem}\label{Thm:optimization}
Consider that the following optimal problem:
\begin{subequations}
\begin{align}
    &\underset{\fm, \Gm, \hm}{\text{\rm \bf minimize}}\quad\frac{(1-k)}{\|\nabla V\|}\|\fm-\fn\| + \frac{k}{2\gamma^2}\|\Gm-\Gn\|^2 + \frac{k}{\|\nabla V\|^2}\|\hm - \hn\|^2 \label{Eq:thm_optimization_minimize}\\
    &\text{\rm \bf subject to}\quad \mathrm{HJ} (\fm,\Gm,\hm) \leq 0,\label{Eq:thm_optimization_subject}
\end{align}\label{Eq:thm_optimization}
\end{subequations}
where $k \in [0,1]$.
The solution of (\ref{Eq:thm_optimization}) is given by
\begin{align*}
    \fm &= \fn -\frac{1}{\|\nabla V\|^2}\mathrm{R}\left(V_{f} + k^2 V_{Gh}\right) \nabla V,\\
    \Gm &= \Gn -\left(1 - \sqrt{\mathrm{C}\left( - \tfrac{V_{f}}{V_{Gh}};k^2,1\right)} \right)P_{V} \Gn,\\
    \hm &= \sqrt{\mathrm{C}\left(  - \tfrac{V_{f}}{V_{Gh}};k^2,1\right)}\hn,
\end{align*}
where
\begin{align*}
V_{f}\triangleq\nabla V^{\T} \fn,
\quad V_{Gh}\triangleq\frac{1}{2\gamma^2} \|\Gn^\T \nabla V\|^2 + \frac{1}{2}\|\hn\|^2,
\quad P_{V}\triangleq \frac{\nabla V \nabla V^\T}{\|\nabla V\|^2}.
\end{align*}
\end{theorem}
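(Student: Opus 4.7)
The starting point is the structural observation that the Hamilton--Jacobi function (\ref{Eq:HJ_function}) depends on $(f_m, G_m, h_m)$ only through the scalar $\nabla V^\T f_m$, the vector $G_m^\T \nabla V$, and the scalar $\|h_m\|^2$, whereas the objective (\ref{Eq:thm_optimization_minimize}) is strictly increasing in the components of the modifications that are orthogonal to these. My plan is to exploit this by an elementary orthogonal-decomposition argument: writing $f_m - f_n = \alpha\nabla V + f^\perp$ with $f^\perp\perp\nabla V$, one sees that $f^\perp$ pays in the objective but does nothing for the constraint, so $f^\perp = 0$ at the optimum. The analogous column-wise argument forces $G_m - G_n$ to be a rank-one update in the $\nabla V$ direction, and minimising $\|h_m - h_n\|$ subject to a constraint only on $\|h_m\|$ forces $h_m$ to lie on the ray through $h_n$. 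All of this is pointwise in $x$, so $\nabla V(x)$ may be regarded as a fixed vector.

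With these reductions I would parametrize
\[
f_m = f_n - \tfrac{\tilde\alpha}{\|\nabla V\|^2}\nabla V, \quad G_m = G_n - (1-\sqrt\sigma)\,P_V\,G_n, \quad h_m = \sqrt\tau\, h_n,
\]
with $\tilde\alpha \geq 0$ and $\sigma, \tau \in [0,1]$ (opposite signs would only worsen both objective and constraint). Setting $A = \tfrac{1}{2\gamma^2}\|G_n^\T\nabla V\|^2$ and $B = \tfrac12\|h_n\|^2$ so that $V_{Gh} = A + B$, the constraint (\ref{Eq:thm_optimization_subject}) collapses to $V_f - \tilde\alpha + \sigma A + \tau B \leq 0$, and the objective (\ref{Eq:thm_optimization_minimize}) reduces to a weighted sum of $\tilde\alpha$, $(1-\sqrt\sigma)^2$, and $(1-\sqrt\tau)^2$. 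Since the objective is linear and increasing in $\tilde\alpha$, optimising it out gives $\tilde\alpha = \mathrm{R}(V_f + \sigma A + \tau B)$, leaving a two-variable problem in $(\sigma,\tau) \in [0,1]^2$.

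The remaining problem I would handle by KKT plus case analysis on the value of $-V_f/V_{Gh}$: the specific weighting in (\ref{Eq:thm_optimization_minimize}) is chosen exactly so that the interior stationarity condition forces $\sigma = \tau$, and the clipping interval $[k^2, 1]$ arises from the box constraints on $\sigma,\tau$ together with the switch point at which $\tilde\alpha$ becomes positive. Three regimes emerge: (i) $-V_f/V_{Gh} \geq 1$, where $(\tilde\alpha,\sigma,\tau) = (0,1,1)$ is already feasible; (ii) $k^2 \leq -V_f/V_{Gh} < 1$, where $\tilde\alpha = 0$ and the stationary point $\sigma = \tau = -V_f/V_{Gh}$ saturates the constraint; (iii) $-V_f/V_{Gh} < k^2$, where $\sigma = \tau = k^2$ is forced by the boundary and $\tilde\alpha = V_f + k^2 V_{Gh} > 0$ activates the ramp. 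Reading these three cases off and substituting back into the parametrization produces precisely the clamp $\mathrm{C}(-V_f/V_{Gh}; k^2, 1)$ and the ramp $\mathrm{R}(V_f + k^2 V_{Gh})$ that appear in the theorem.

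The main obstacle is the KKT step that makes $\sigma = \tau$ at the interior optimum: this depends sensitively on the precise scaling of the three terms in (\ref{Eq:thm_optimization_minimize}) (the $1/\|\nabla V\|$, $1/(2\gamma^2)$, and $1/\|\nabla V\|^2$ prefactors together with the $k/(1-k)$ split), and is exactly what allows a single scalar $\sqrt{\mathrm{C}(\cdot)}$ to govern both $G_m$ and $h_m$ in the closed form. Everything else---the orthogonal reduction, the minimisation over $\tilde\alpha$, and the three-way case split---is routine bookkeeping.
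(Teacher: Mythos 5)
Your proposal follows essentially the same route as the paper's own proof: the paper likewise observes that the modification of $\fm$ and of each row of $\Gm$ can be taken along $\nabla V$ and that $\hm$ is a rescaling of $\hn$, writes $\fm=\fn-\beta p$, $\Gm=\Gn-\beta q^\T$, $\hm=(1-r)\hn$ with $\beta=\nabla V$, and then solves the resulting reduced problem in closed form; the ramp/clamp structure comes from exactly the three-way case analysis (constraint slack, constraint active with multiplier, box/ramp boundary) that you describe. The only organizational difference is that the paper keeps $q\in\R^m$ as a vector and feeds the reduced problem into a standalone QCQP lemma (Lemma~\ref{lem:QCQP}, itself proved by the case split and a Lagrange-multiplier computation), whereas you pre-reduce $q$ to a scalar and eliminate $\tilde\alpha$ by hand before doing KKT in $(\sigma,\tau)$.

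Two points deserve attention. First, writing $\Gm=\Gn-(1-\sqrt{\sigma})P_V\Gn$ uses not only that the columns of $\Gm-\Gn$ lie along $\nabla V$ but also that the row direction of the rank-one update is along $\Gn^\T\nabla V$; this needs the same one-line argument you give for $h$ (decompose $q$ into components parallel and orthogonal to $\Gn^\T\nabla V$ and note the orthogonal part increases both objective and constraint), whereas in the paper it simply falls out of the lemma because $q$ is left free. Second, the step you explicitly defer---that interior stationarity forces $\sigma=\tau$---is the crux and should be computed: it holds precisely when the reduced weights satisfy $w_G/A=w_h/B$, which requires the $h$-term of the objective to carry the coefficient $\tfrac{k}{2\|\nabla V\|^2}$ (this is the weighting the appendix actually uses when rewriting the objective as $k_1|p|+k_2\tfrac{\|\beta\|^2}{2\gamma^2}\|q\|^2+k_2\tfrac{\|\hn\|^2}{2\|\beta\|^2}r^2$). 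With the coefficient $\tfrac{k}{\|\nabla V\|^2}$ as printed in (\ref{Eq:thm_optimization_minimize}) the stationarity conditions give different shrinkage factors for $G$ and $h$, so a single clamp would not govern both. This factor-of-two mismatch is an inconsistency internal to the paper rather than an error of yours, but since your argument leans on ``the weighting is chosen exactly so that $\sigma=\tau$,'' you should verify that computation explicitly rather than assert it.
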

\begin{proof}
See Appendix~\ref{APP1}.
\end{proof}

The objective function of (\ref{Eq:thm_optimization_minimize}) is a new distance between the nominal dynamics $(\fn, \Gn, \hn)$ and the modified dynamics $(\fm,\Gm,\hm)$.
This new distance allows the derivation of analytical solutions by combining three distances of $f$, $G$, and $h$.

Focusing on the objective function~(\ref{Eq:thm_optimization_minimize}), the constant $k$ represents the ratio of the distance scale of $f$ to $G$ and $h$.
When $k = 0$, the result of this problem~(\ref{Eq:thm_optimization}) are consistent with the projection of the conventional method that guarantee internal stability \cite{Manek2019}.
As the constant $k$ converges $1$, the modification method of Theorem~\ref{Thm:optimization} approaches $\Gm$ and $\hm$ to $\Gn$ and $\hn$, respectively.
In this case, the objective function~(\ref{Eq:thm_optimization_minimize}) becomes the distance between $\fm$ to $\fn$.
Therefore, the following corollary is satisfied.
\begin{corollary}\label{Coro:optimization_fix_Gh}
The solution of 
\begin{subequations}
\begin{align}
    &\underset{\fm}{\text{\rm \bf minimize}}\quad \|\fm-\fn\| \label{Eq:coro1_optimization_minimize} \\
    &\text{\rm \bf subject to}\quad \mathrm{HJ} (\fm,\Gn,\hn) \leq 0,\label{Eq:coro1_optimization_subject}
\end{align}\label{Eq:coro_optimization}
\end{subequations}
is given by
\begin{align*}
    \fm &= \fn -  \frac{1}{\|\nabla V\|^2}\mathrm{R}\left({\rm HJ}(\fn,\Gn,\hn) \right) \nabla V.
\end{align*}
\end{corollary}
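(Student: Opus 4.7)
The plan is to observe that Corollary~\ref{Coro:optimization_fix_Gh} is precisely the $k=1$ specialization of Theorem~\ref{Thm:optimization}, but since it is considerably simpler I would prove it directly rather than appeal to the full Theorem. After freezing $\Gm = \Gn$ and $\hm = \hn$, the Hamilton–Jacobi constraint becomes linear (actually affine) in $\fm$. So the problem reduces to projecting the point $\fn$ onto a half-space in $\R^n$ — a textbook convex programming exercise whose solution I just need to write in the paper's notation.

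Concretely, I would first set $c \triangleq V_{Gh} = \tfrac{1}{2\gamma^2}\|\Gn^\T \nabla V\|^2 + \tfrac12\|\hn\|^2$, which is constant in $\fm$. Then constraint \eqref{Eq:coro1_optimization_subject} reads
\begin{equation*}
\nabla V^\T \fm + c \;\leq\; 0,
\end{equation*}
i.e.\ $\fm$ lies in the closed half-space $\mathcal{H} \triangleq \{f \in \R^n : \nabla V^\T f \leq -c\}$. The objective $\|\fm - \fn\|$ is the Euclidean distance to $\fn$, so the unique minimizer is the orthogonal projection of $\fn$ onto $\mathcal{H}$.

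Second, I would split on whether $\fn$ already lies in $\mathcal{H}$. If $\nabla V^\T \fn + c \leq 0$, equivalently $\mathrm{HJ}(\fn, \Gn, \hn) \leq 0$, the constraint is inactive and the minimizer is $\fm = \fn$. Otherwise the constraint is active, and the standard projection-onto-a-half-space formula gives
\begin{equation*}
\fm \;=\; \fn \;-\; \frac{\nabla V^\T \fn + c}{\|\nabla V\|^2}\,\nabla V \;=\; \fn \;-\; \frac{\mathrm{HJ}(\fn,\Gn,\hn)}{\|\nabla V\|^2}\,\nabla V.
\end{equation*}
The two cases can be combined via the ramp function $\mathrm{R}$, since $\mathrm{R}(\mathrm{HJ}(\fn,\Gn,\hn))$ is $0$ exactly when $\fn \in \mathcal{H}$ and equals $\mathrm{HJ}(\fn,\Gn,\hn)$ otherwise, yielding the stated closed form.

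There is no real obstacle; the only delicate point is that the formula implicitly requires $\|\nabla V\| > 0$, so strictly speaking the argument applies at those $x$ where $\nabla V(x) \neq 0$, i.e., away from critical points of $V$ (which in the intended setting are only the stable equilibria, where both sides are consistently well-defined by continuity). I would note this briefly, and as an alternative verification I would point out that specializing Theorem~\ref{Thm:optimization} to $k=1$ (so that $\mathrm{C}(-V_f/V_{Gh};1,1) = 1$, forcing $\Gm = \Gn$ and $\hm = \hn$, while $V_f + V_{Gh} = \mathrm{HJ}(\fn,\Gn,\hn)$) recovers exactly the formula above, giving an independent consistency check.
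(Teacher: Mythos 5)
Your proof is correct, but it takes a more elementary route than the paper: the paper simply derives the corollary as the $k\to 1$ specialization of Theorem~\ref{Thm:optimization} (one line, leaning on the QCQP machinery of Lemma~\ref{lem:QCQP}), whereas you freeze $(\Gn,\hn)$, observe that $\mathrm{HJ}(\fm,\Gn,\hn)\le 0$ is an affine constraint $\nabla V^\T \fm \le -V_{Gh}$ in $\fm$, and invoke the closed-form Euclidean projection onto a half-space, combining the active/inactive cases with the ramp function. Your direct argument buys self-containedness and sidesteps a small degeneracy in the paper's route: at exactly $k=1$ the $\fm$-term disappears from the objective~(\ref{Eq:thm_optimization_minimize}), so the minimizer in $\fm$ is no longer unique and the specialization is really a limiting statement (the paper's own wording, ``as $k$ converges to $1$,'' reflects this); your consistency check at the end, plugging $k=1$ into the Theorem's formulas and using $\mathrm{C}(\,\cdot\,;1,1)\equiv 1$, is essentially the paper's proof and inherits that caveat, but since you only use it as a sanity check the argument stands. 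Your remark that the formula requires $\|\nabla V\|>0$ away from the equilibria is a fair point that the paper leaves implicit (it divides by $\|\nabla V\|^2$ throughout). One cosmetic note: minimizing $\|\fm-\fn\|$ versus $\|\fm-\fn\|^2$ gives the same projection, as you implicitly use.
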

\begin{proof}
This solution is easily derived from Theorem~\ref{Thm:optimization}.
\end{proof}

Corollary~\ref{Coro:optimization_fix_Gh} derives a solution of a linear programming problem rather than QCQP problems.
Replacing the Hamilton-Jacobi function $\mathrm{HJ}(\fm,\Gn,\hn)$ with the time derivative of a positive definite function $\nabla V^\T f$, this corollary matches the result of the conventional study \cite{Manek2019}.

When the map $\hm$ is fixed as $\hn$, a similar solution as Theorem~\ref{Thm:optimization} is derived.
Although Corollary~\ref{Coro:optimization_fix_Gh} is proved by changing $k$, the modified dynamics with the fixed $\hn$ are not derived.
We reprove this modified dynamics in a similar way to Theorem~\ref{Thm:optimization}.
\begin{corollary}\label{Coro:optimization_fix_h}
Consider the following problem:
\begin{subequations}
\begin{align}
    &\underset{\fm, \Gm}{\text{\rm \bf minimize}}\quad \frac{(1-k)}{\|\nabla V\|}\|\fm-\fn\| + \frac{k}{2 \gamma^2} \|\Gm-\Gn\|^2 \label{Eq:coro2_optimization_minimize} \\
    &\text{\rm \bf subject to}\quad \mathrm{HJ} (\fm,\Gm,\hn) \leq 0, \label{Eq:coro2_optimization_subject}
\end{align}\label{Eq:coro2_optimization}
\end{subequations}
where $k\in [0,1]$.
The solution of (\ref{Eq:coro2_optimization}) is given by
\begin{align}
    \fm = \fn -  \frac{1}{\|\beta\|^2}\mathrm{R}\left(V_{fh} + k^2V_{G}\right) \nabla V,\quad
    \Gm= \Gn -\left(1 - \sqrt{\mathrm{C}\Big( - \tfrac{V_{fh}}{V_{G}};k^2,1\Big)} \right)P_{V} \Gn,
\label{Eq:coro_optimization_solution}
\end{align}
where
\begin{align*}
V_{fh}\triangleq\nabla V^{\T} \fn + \frac{1}{2}\|\hn\|^2,
\quad V_{G}\triangleq\frac{1}{2\gamma^2} \|\Gn^\T \nabla V\|^2.
\end{align*}
\end{corollary}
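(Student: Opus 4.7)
The plan is to reuse the proof architecture of Theorem~\ref{Thm:optimization}, this time with $\hm$ pinned to $\hn$. Since $h$ is no longer a variable, the term $\tfrac{1}{2}\|\hn\|^2$ becomes a constant in the HJ constraint; the statement absorbs it into $\nabla V^\T \fn$ to form $V_{fh}$, while $V_G$ keeps only the $G$-piece.

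First I would apply the same orthogonal decomposition used in Theorem~\ref{Thm:optimization}. Writing $\fm - \fn = \Delta f_{\parallel} + \Delta f_{\perp}$ with $\Delta f_{\parallel} \in \mathrm{span}\{\nabla V\}$ and $\Delta f_{\perp} \perp \nabla V$, and $\Gm = P_V \Gm + (I - P_V) \Gm$, the constraint~(\ref{Eq:coro2_optimization_subject}) depends only on $\Delta f_{\parallel}$ and $P_V \Gm$, since $(I - P_V)\nabla V = 0$. Because the objective is strictly increasing in $\|\Delta f_{\perp}\|$ and in $\|(I - P_V)(\Gm - \Gn)\|$, every optimizer must satisfy $\Delta f_{\perp} = 0$ and $(I - P_V)\Gm = (I - P_V)\Gn$. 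A short Cauchy--Schwarz argument then shows that, because $P_V \Gn$ has column space contained in $\mathrm{span}\{\nabla V\}$, minimizing $\|P_V(\Gm - \Gn)\|_F$ subject to any upper bound on $\|\Gm^\T \nabla V\|$ forces $P_V \Gm$ to be a non-negative multiple of $P_V \Gn$. The feasible set collapses to the two-parameter family $\fm = \fn - \alpha \nabla V$ and $\Gm = \Gn - (1 - s) P_V \Gn$ with $\alpha \geq 0$, $s \in [0, 1]$, and the problem becomes minimizing $(1 - k)\alpha + \tfrac{k (1 - s)^2 V_G}{\|\nabla V\|^2}$ subject to $V_{fh} - \alpha \|\nabla V\|^2 + s^2 V_G \leq 0$.

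Next I would saturate the constraint in $\alpha$, which gives $\alpha = \mathrm{R}(V_{fh} + s^2 V_G)/\|\nabla V\|^2$. On the region where $V_{fh} + s^2 V_G \leq 0$ the constraint is slack with $\alpha = 0$ and the objective reduces to $k (1 - s)^2 V_G/\|\nabla V\|^2$, decreasing in $s$; on the region where $V_{fh} + s^2 V_G > 0$ the constraint binds and expansion yields the objective $[(1 - k) V_{fh} + V_G((s - k)^2 + k(1 - k))]/\|\nabla V\|^2$, whose unconstrained minimizer is at $s = k$. Matching these two regimes on $s \in [0, 1]$ produces exactly $s = \sqrt{\mathrm{C}(-V_{fh}/V_G;\, k^2, 1)}$: the upper clamp at $1$ covers the already-feasible case, the interior value $\sqrt{-V_{fh}/V_G}$ covers the case in which scaling $P_V \Gn$ alone suffices (so $\alpha = 0$), and the lower clamp at $k^2$ covers the case in which both $f$ and $G$ must be modified.

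The main obstacle will be the three-way case analysis that verifies the residual $\alpha$ collapses uniformly to $\mathrm{R}(V_{fh} + k^2 V_G)/\|\nabla V\|^2$: in the upper-clamp and interior regimes one has $V_{fh} + k^2 V_G \leq 0$, so the ramp kills $\alpha$ and $\fm = \fn$; in the lower-clamp regime this quantity is strictly positive, so the ramp acts as the identity and recovers the stated $\fm$. Once the regimes are matched, the formulas in~(\ref{Eq:coro_optimization_solution}) follow by the same algebraic manipulations as in Theorem~\ref{Thm:optimization} under the substitutions $V_f \leftrightarrow V_{fh}$ and $V_{Gh} \leftrightarrow V_G$.
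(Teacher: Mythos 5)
Your proposal is correct, and it reaches the stated formulas by a sound argument; the difference from the paper lies in how the reduced problem is solved. Like the paper, you first observe that only the components along $\nabla V$ matter, writing $\fm=\fn-\alpha\nabla V$ and modifying $\Gm$ only through $P_V\Gn$; the paper makes the same reduction by writing $\fm=\fn-\beta p$, $\Gm=\Gn-\beta q^\T$ with $q\in\R^m$. From there the paper keeps $q$ as a vector, recognizes the reduced problem as the QCQP of Lemma~\ref{lem:QCQP} (with $x=q$, $y=p$, $A=\tfrac{\|\beta\|^2}{2\gamma^2}I_m$), and quotes the lemma, whose own proof runs through a sign/threshold case split and a Lagrange-multiplier computation. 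You instead add an alignment (Cauchy--Schwarz) step showing the optimal $P_V\Gm$ is a nonnegative multiple $s\,P_V\Gn$, eliminate $\alpha$ by saturating the constraint, and do the three-regime analysis in the single scalar $s$, which reproduces exactly the clamp thresholds ($s=k$ when the constraint forces modifying $f$, $s=\sqrt{-V_{fh}/V_G}$ when rescaling $G$ alone suffices, $s=1$ when already feasible) and the ramp in $\alpha$. What the paper's route buys is reuse: Lemma~\ref{lem:QCQP} serves both Theorem~\ref{Thm:optimization} and this corollary; what your route buys is a more elementary, self-contained derivation that makes the regime boundaries transparent without KKT machinery. Two small points to tighten if you write it up: the claims that \emph{every} optimizer has $\Delta f_\perp=0$ and $s\in[0,1]$ need the strict-positivity of both weights (the boundary cases $k\in\{0,1\}$ and $V_G=0$ should be treated, or excluded, exactly as the paper implicitly does via the positivity assumptions in Lemma~\ref{lem:QCQP}), and the ``feasible set collapses'' phrasing should be ``the search for optimizers collapses,'' since feasible points with $\alpha<0$ or misaligned $G$-modifications exist but are never optimal.
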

\begin{proof}
See Appendix~\ref{APP1}.
\end{proof}

\subsection{Loss function}

\begin{figure}[t]
    \centering
     \includegraphics[width=1.0\linewidth]{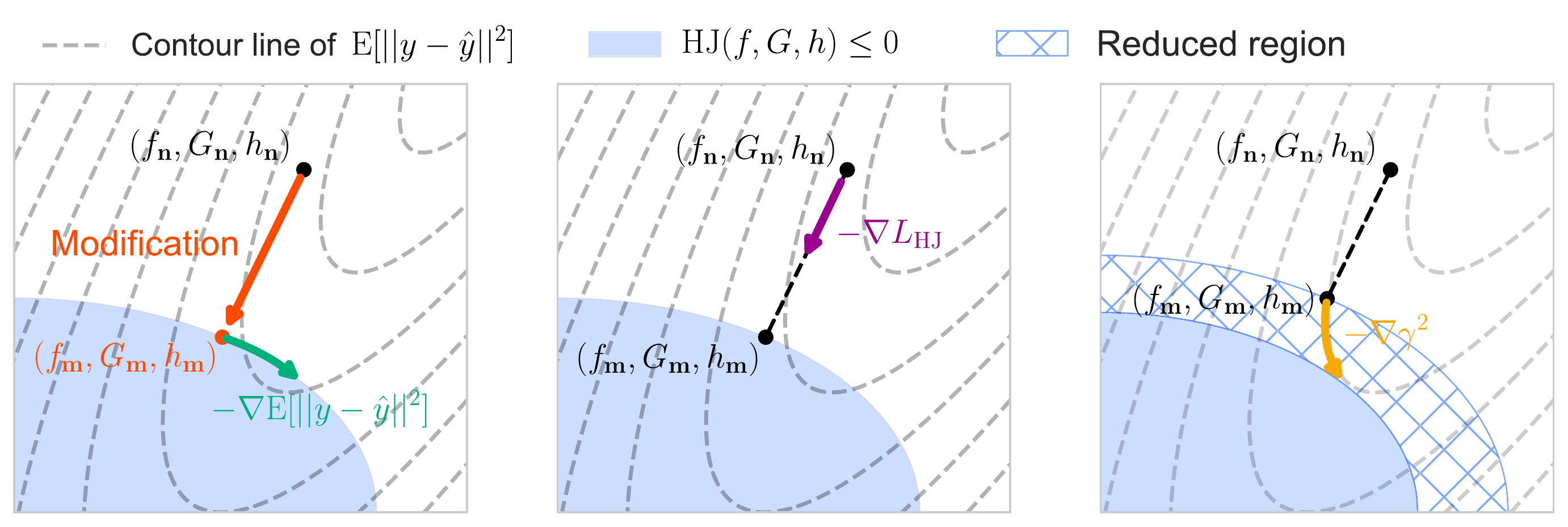}
    \text{(A)\hspace{27ex} (B)\hspace{27ex}(C)}
    \caption{Sketches of our method :
    (A) minimizing the prediction error (the first term of the loss function~(\ref{Eq:HJ_Loss})) in the blue region,
    (B) moving the nominal dynamics to the blue region (the second term),
    and (C) reducing the blue region while keeping the same level of the prediction error (the last term).
    }
    \label{fig:method_sketch}
\end{figure}

We represent $(\fn,\Gn,\hn)$ as neural networks and denote $(\fm,\Gm,\hm)$ as the $\mathcal{L}_2$ stable dynamics modified by  Theorem~\ref{Thm:optimization}, Corollary~\ref{Coro:optimization_fix_Gh}, or  \ref{Coro:optimization_fix_h}.
Note that the modification depends on the candidate of $\mathcal{L}_2$ gain $\gamma$.
Figure~\ref{fig:method_sketch}~(A) shows the sketch of this modification, where the blue region satisfies the Hamilton-Jacobi inequality.

Since the nonlinear system of the modified dynamics $(\fm,\Gm,\hm)$ is represented as ordinary differential equations (ODEs) consisting of the differentiable functions $\fn$,$\Gn$, and $\hn$, the techniques of training neural ODEs can be applied \cite{chen2019review}.
Once a loss function is designed, the parameters of neural networks in the modified system can be learned from given data.

\begin{align}
\mathrm{Loss}& = \mathrm{E}_{(x_0,u,\hat{y}) \in \cal{D}}[||y - \hat{y}||_{\mathcal{L}_2}^2] +\lambda L_{\rm HJ} + \alpha \gamma^2, \label{Eq:HJ_Loss}
\end{align}
where $\lambda$ and $\alpha$ are non-negative coefficients.

The first term shows the prediction error of the output signal $y$ ( Figure~\ref{fig:method_sketch}~(A)).
A dataset $\cal{D}$ consists of tuples $(x_0,u,y)$ where the initial value $x_0$, the input signal $u$, and the output signal $y$. 
The predicted output $\hat{y}$ is calculated from $x_0$, $u$, and the modified dynamics $(\fm,\Gm,\hm)$.

The second term aims to improve the nominal dynamics $(\fn,\Gn,\hn)$ closer to the modified dynamics $(\fm,\Gm,\hm)$ and is defined as
\begin{align*}
L_{\mathrm{HJ}}& =  \mathrm{E}_{x }[\mathrm{R}(\mathrm{HJ}(\fn,\Gn,\hn)(x) + \varepsilon)],
\end{align*}
where $\varepsilon$ is a positive constant ( Figure~\ref{fig:method_sketch}~(B)).
Since this term is a form of the hinge loss, $\varepsilon$ represents the magnitude of the penalties for the Hamilton-Jacobi inequality.
To evaluate this inequality for any $x\in D$, we introduce a distribution of $x$ over the domain $D$.
In our experiments, this distribution is decided as a Gaussian distribution ${\cal N}(\mu,\sigma^2)$ where the mean $\mu$ is placed at the asymptotically stable point and the variance $\sigma^2$ is an experimental parameter.
Without this second term of the loss function, there are degrees of freedom in the nominal dynamics, i.e., multiple nominal dynamics give the same loss by the projection, which negatively affects the parameter learning.

The modifying parameter $\gamma$ can be manually designed for the application or automatically trained from data by introducing the last term.
This training explores smaller $\gamma$ while keeping the same level of the prediction error (Figure~\ref{fig:method_sketch}~(C)).

\section{Related work}
Estimating parameters of a given system is traditionally studied as system identification.
In the field of system identification, much research on the identification of linear systems have been done,
where the maps $(f,G,h)$ in Eq.~(\ref{Eq:main_system}) assumes to be linear \citep{2005Katayama}.
Linear state-space models include identification methods by impulse response like the Eigensystem Realization Algorithm (ERA)\cite{juang1985eigensystem}, by the state-space representation like Multivariable Output Error State sPace (MOESP)\cite{verhaegen1992subspace} and ORThogonal decomposition method (ORT)\cite{katayama2005simple}.
System identification methods for non-state-space models, unlike our target system, contain Dynamic Mode Decomposition with control (DMDc)\cite{proctor2016dynamic} and its nonlinear version, Sparse Identification of Nonlinear DYnamics with control (SINDYc)\cite{brunton2016sparse}.
Also, the nonlinear models such as the nonlinear ARX model \cite{Garulli2012} and the Hammerstein-Wiener model \cite{Wills2013identification} have been developed and often trained by error minimization.
For example, the system identification method using a piece-wise ARX model allows more complicated functions by relaxing the assumption of linearity \cite{Garulli2012}.

These traditional methods often concern gray-box systems, where the maps $(f,G,h)$ in Eq.~(\ref{Eq:main_system}) are partially known \cite{ljung1998system}.
This paper deals with a case of black-box systems, where $f$, $G$, and $h$ in the system~(\ref{Eq:main_system}) are represented by neural networks.
Our method can be used regardless of whether all $f$, $G$, and $h$ functions are parameterized using neural networks.
So, our method can be easily applied to application-dependent gray-box systems when the functions $f$, $G$, and $h$ are differentiable with respect to the parameters.

Because the input-output system~(\ref{Eq:main_system}) can be regarded as a differential equation, our study is closely related to the method of combining neural networks and ODEs \cite{chen2018neural,chen2019review}.
These techniques have been improved in recent years, including discretization errors and computational complexity.
Although we used an Euler method  for simplicity,
we can expect that learning efficiency would be further improved by using these techniques.

The internal stability is a fundamental property in ODEs, and learning a system with this property using neural networks plays an important role in the identification of real-world systems \cite{braun1983differential}.
In particular, the first method to guarantee the internal stability of the trained system has been proposed in \cite{Manek2019}.
Furthermore, another method \cite{takeishi2021learning} extends this method to apply positive invariant sets, e.g. limit cycles and line attractors.
Encouraged by these methods based on the Lyapunov function, our method further generalizes these methods using the Hamilton-Jacobi inequality to guarantee the input-output stability.

In this paper, the Lyapunov function $V$ is considered to be given, but this function can be learned from data.
Since a pair of dynamics (\ref{Eq:main_system}) and $V$ has redundant degrees of freedom, additional assumptions are required to determine $V$ uniquely. In \cite{Manek2019}, it is realized by limiting the dynamics to the internal system and restricting $V$ to a convex function \cite{amos2017input}.

Lyapunov functions are also used to design controllers, where the whole system should satisfy the stability condition.
A method for learning such a controller using neural nets has been proposed \cite{Chang2019dynamics}.
This method deals with optimization problems over the space that satisfies the stability condition, which is similar to our method.
Whereas we solve the QCQP problem to derive the projection onto the space, this method uses a Satisfiability Modulo Theories (SMT) solver to satisfy this condition.
The method has also been extended to apply unknown systems \cite{zinage2022neural}.

Although this paper deals with deterministic systems, neural networks for stochastic dynamics with the variational inference is well studied  \cite{krishnan2016structured,gedon2021deepssm,chung2015recurrent,bayer2014learning}.
Guaranteeing the internal stability of trained stochastic dynamics is important for noise filtering and robust controller design \cite{Lawrence2020}.

\section{Experiments}\label{sec:result}

We conduct two experiments to evaluate our proposed method.
The first experiment uses a benchmark dataset generated from a nonlinear model with multiple asymptotically equilibrium points.
In the next experiment, we applied our method to a biological system using a simulator of the glucose-insulin system.

\subsection{Experimental setting}
Before describing the results of the experiments, this section explains the evaluation metrics and our experimental setting.

For evaluation metrics, we define the root mean square error (RMSE) and the average $\mathcal{L}_2$ gain (GainIO) for the given input and output signals as follows:
\begin{align*}
\text{RMSE} \triangleq \sqrt{ \frac{1}{N}\sum_{i=1}^N \| y_i-\hat{y}_i \|^2_{\mathcal{L}_2}},\quad \text{GainIO} \triangleq \frac{1}{N}\sum^N_{i=1} \frac{\|\hat{y}_i\|_{\mathcal{L}_2} }{ \|u_i\|_{\mathcal{L}_2}},
\end{align*}
where $N$ is the number of signals in the dataset.
$u_i(\cdot)$ and $y_i(\cdot)$ are the input and output signal at the $i$-th index, respectively.
The prediction signal $\hat{y}_i(\cdot)$ is computed from $u_i(\cdot)$, the trained dynamics, and the initial state.
Note that the integral contained in the $\mathcal{L}_2$ norm is approximated by a finite summation.
The RMSE  is a metric of the prediction errors related to the output signal, and the GainIO is a metric of the property of the $\mathcal{L}_2$ stability. 
Whether the target system satisfies or does not satisfy the $\mathcal{L}_2$ stability, the GainIO with a given finite-size dataset can be calculated.
The GainIO error is defined by the absolute error between the GainIO of the test dataset and that of the prediction.

In our experiments, 90$\%$ of the dataset is used for training and the remaining 10$\%$ is used for testing.
We retry five times for all experiments and show the mean and standard deviations of the metrics.

For simplicity in our experiments, the sampling step $\Delta t$ for the output $y$ is set as constant and the Euler method is used to solve ODEs.
$x_0$ is put at an asymptotically stable point for each benchmark and is known.
In this experiment, to prevent the state from diverging during learning of dynamics, the clipping operation is used so that the absolute values of the states are less than ten.

For comparative methods, we use vanilla neural networks, ARX, ORT \cite{katayama2005simple}, MOESP \cite{verhaegen1992subspace}, and piece-wise ARX\cite{Garulli2012}.
In the method of vanilla neural networks, the maps $(f,G,h)$ in the nonlinear system~(\ref{Eq:main_system}) is represented by using three neural networks, i.e., this method is consistent with a method used in Figure~\ref{fig:introduction}.
To determine the hyperparameters of comparative methods except for neural networks, the grid search is used.
Note that these comparative methods only consider the prediction errors.

For training each method with neural networks, an NVIDIA Tesla T4 GPU was used.
Our experiments are totally run on 20 GPUs over about three days.

\subsection{Bistable model benchmark}

\begin{figure}[t]
    \centering
    \includegraphics[width=0.8\linewidth]{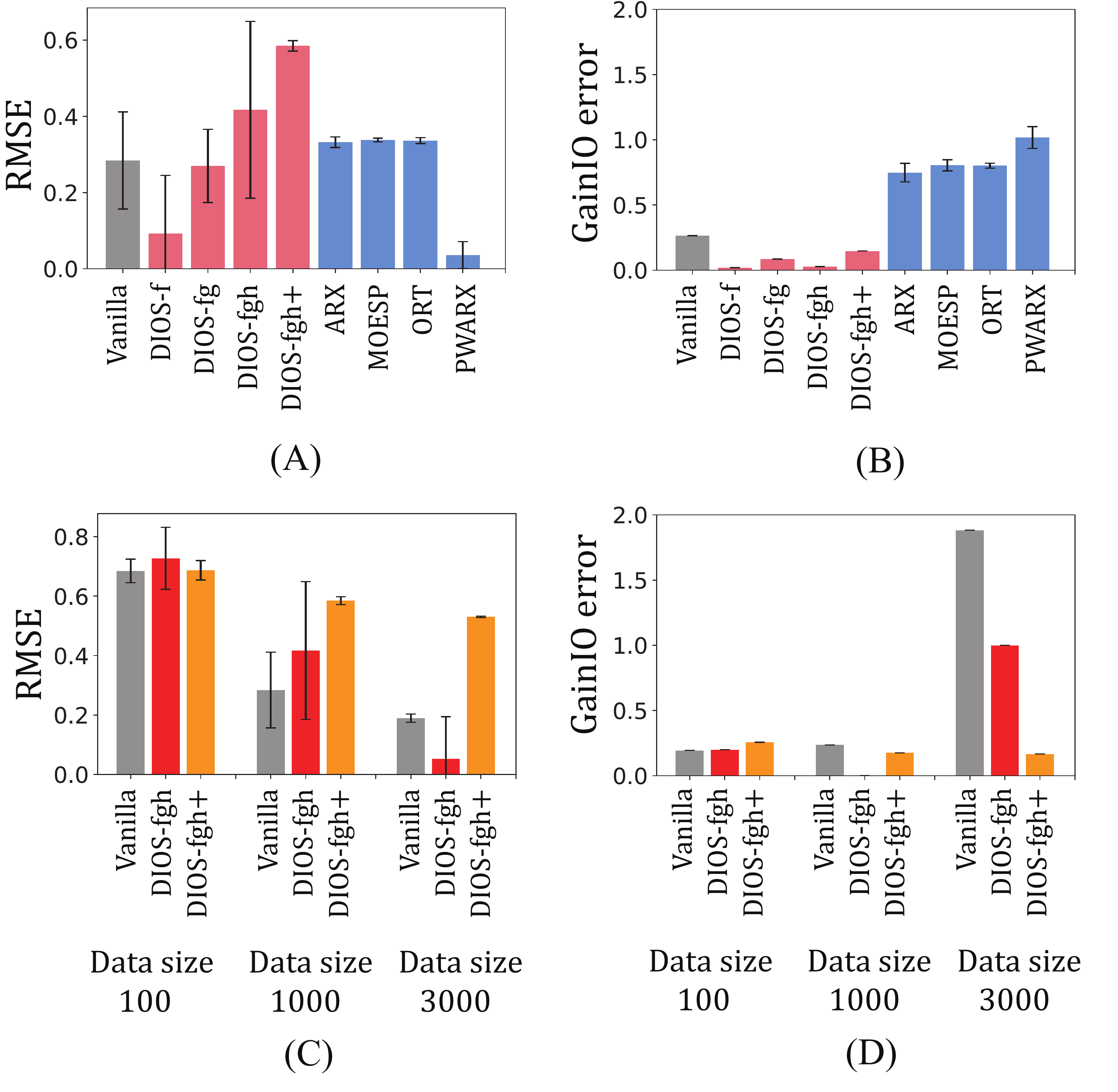}
    \caption{Results of the bistable model benchmark.
    The upper part shows (A) the RMSE and (B) the GainIO error of the vanilla neural networks (gray), our proposed methods (red), and the conventional methods (blue). The lower part shows (C) the RMSE and (D) the GainIO error for the different sizes of the datasets.}
    \label{fig:TBS_results}
\end{figure}

The first experiment is carried out using a bistable model, which is known as a bounded system with multiple asymptotically equilibrium points.
This bistable model is defined as
\begin{align*}
\dot{x} =  x(1-x^2) + u,\quad x(0) = -1,\quad y = x.
\end{align*}
The internal system of this model has two asymptotically stable equilibrium points $x \equiv 1, -1$.

We generate 1000 input and output signals for this experiment.
To construct this dataset, we prepare input signals using positive and negative pulse wave signals whose pulse width is changed at random.
The input and output signals on the period $[0,10]$ are sampled with an interval $\Delta t = 0.1$.
In this benchmark, we set the number of dimensions of the internal system as one and use a fixed function $V(x)=\min((x-1)^2,(x+1)^2)$, a mixture of the two positive definite functions. 

In the result of our experiments, we name the proposed methods modified by Theorem~\ref{Thm:optimization}, Corollary~\ref{Coro:optimization_fix_Gh}, and  \ref{Coro:optimization_fix_h} as DIOS-fgh, DIOS-f, and DIOS-fg, respectively.
In these methods, the parameters of our loss function are set as  $\lambda=0$ and $\alpha=0.01$.
Also, DIOS-fgh+ uses $\lambda=0.01$ and $\alpha=0.01$ under the same conditions as DIOS-fgh. 
For this example (1000 input signal), it took about 1 hour using 1 GPU to learn one model training.

The results of the RMSE and the GainIO error in this experiment are shown in Figure~\ref{fig:TBS_results}.
Figures~\ref{fig:TBS_results} (A) and (B) demonstrate that a piece-wise ARX model (PWARX) gives very low RMSE but its GainIO error is high.
Our proposed methods achieve a small GainIO error while keeping the RMSE.
Note that linear models such as MOESP and ORT only approach one point although the bistable model has two asymptotically stable equilibrium points.
Figures~\ref{fig:TBS_results} (C) and (D) display the effect of dataset sizes.
When the dataset size was varied to 100, 1000, and 3000, the result of the larger dataset provided the smaller RMSE for all methods.
The vanilla neural networks do not consider the $\mathcal{L}_2$ gain, so the GainIO error was large in the case of the low RMSE.

\begin{figure}[t]
    \centering
\includegraphics[width=0.8\linewidth]{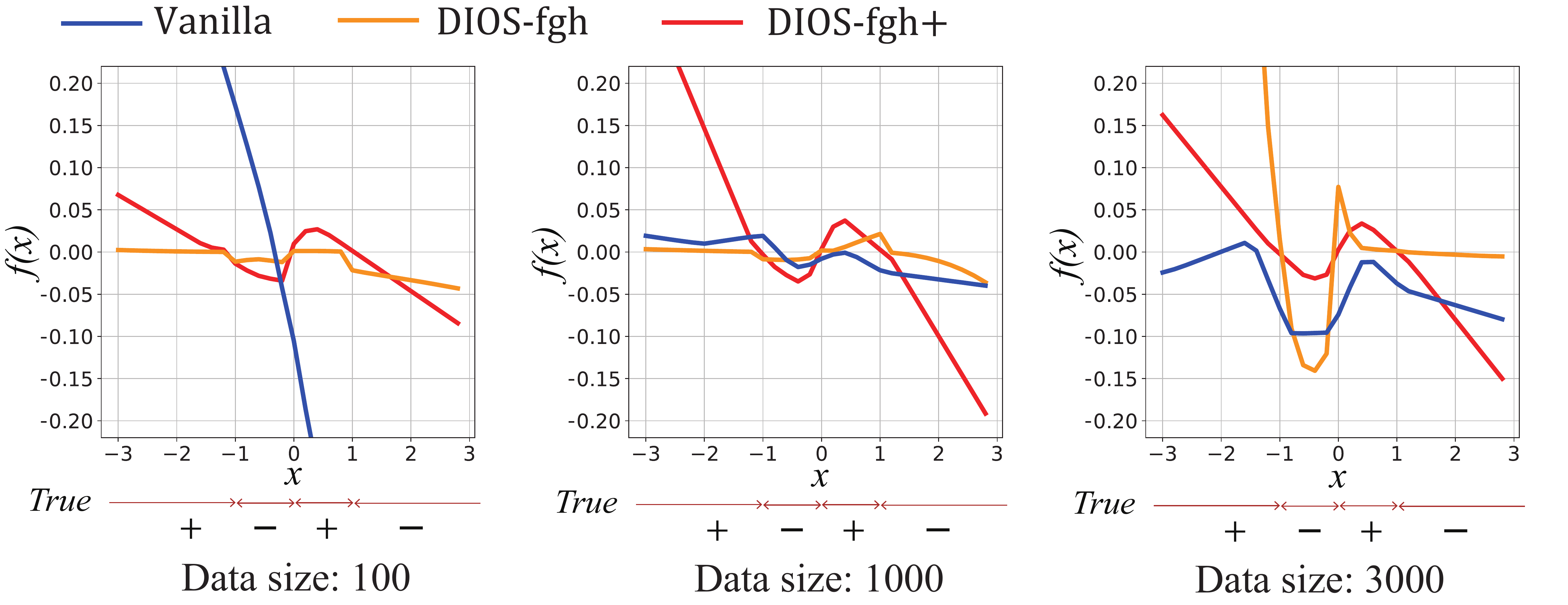}
\caption{The sketch $x$-$f(x)$ displaying the internal dynamics trained from the bistable datasets with the different sizes.
The sign of the bistable model is shown at the bottom of each figure.
}
    \label{fig:TBS_bistable_fx}
\end{figure}

Figure~\ref{fig:TBS_bistable_fx} shows the relationship between $x$ and $f(x)$ in the trained system in (\ref{Eq:main_system})  to compare the vanilla neural networks, DIOS-fgh and DIOS-fgh+.
DIOS-fgh and DIOS-fgh+ successfully find two stable points, i.e., the trained function $f(x)$ had roots of two stable points $x = \pm1$ and an unstable point $x = 0$.
Especially, these stable points were robustly estimated in the trained system using our presented loss function (DIOS-fgh+) even when the dataset size was small.
The vanilla neural networks failed to obtain the two stable points in all cases. 

\subsection{Glucose-insulin benchmark}
This section addresses an example of the identification of biological systems.
We consider the task of learning the glucose-insulin system using a simulator \cite{DEGAETANO200041} to construct responses for various inputs and evaluate the robustness of the proposed method for unexpected inputs.
This simulator outputs the concentrations of plasma glucose $y_1$ and insulin $y_2$ for the appearance of plasma glucose per minute $u$.
To determine the realistic input $u$, we adopt another model, an oral glucose absorption model \cite{DallaMan2007}.

Using this simulator, 1000 input and output signals are synthesized for this experiment.
The input and output signals are sampled with a sampling interval  $\Delta t = 1$ and 1000 steps for each sequence.
In this benchmark, we set the number of dimensions of the internal system as six, fix a positive definite function $V(x)=x^2$, and use our loss function with $\lambda=0.001$ and $\alpha=0.001$.
Training one model for this examples (1000 input signal) tooks about 7.5hours using 1GPU.
The hyperparameters including the number of layers in the neural networks, the learning rate, optimizer, and the weighted decay are determined using the tree-structured Parzen estimator (TPE) implemented in Optuna  \cite{optuna_2019}.


\begin{figure}[t]
    \centering
    \includegraphics[width=0.90\linewidth]{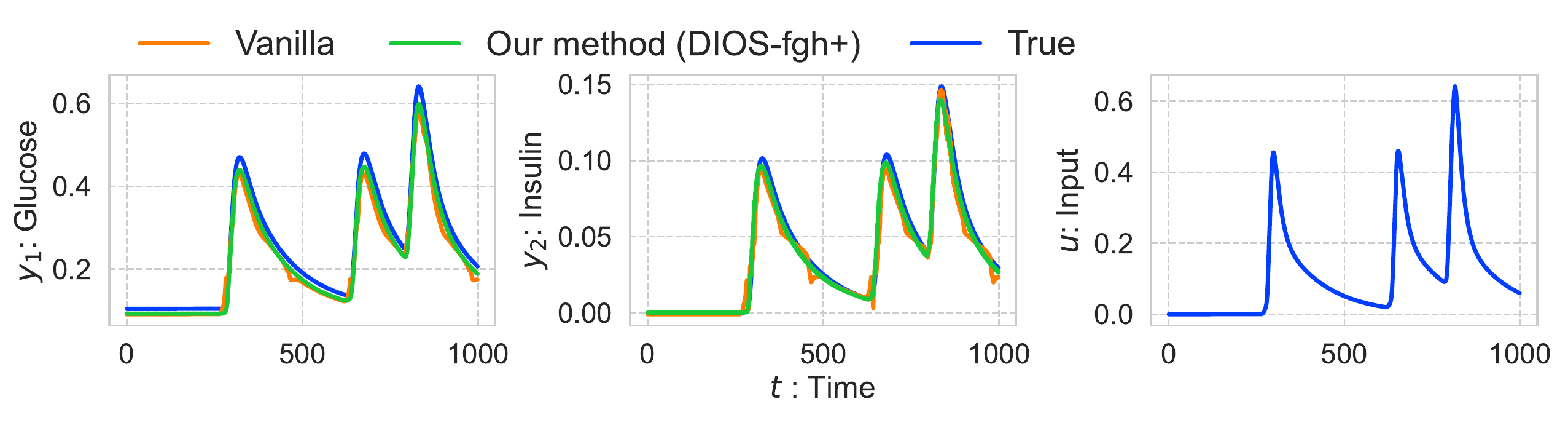}
    \caption{The input and output signals of the glucose-insulin simulator and the predicted output.}
    \label{fig:glcose_insulin_model_reaction}
\end{figure}

\begin{figure}[t]
    \centering
    \includegraphics[width=0.9\linewidth]{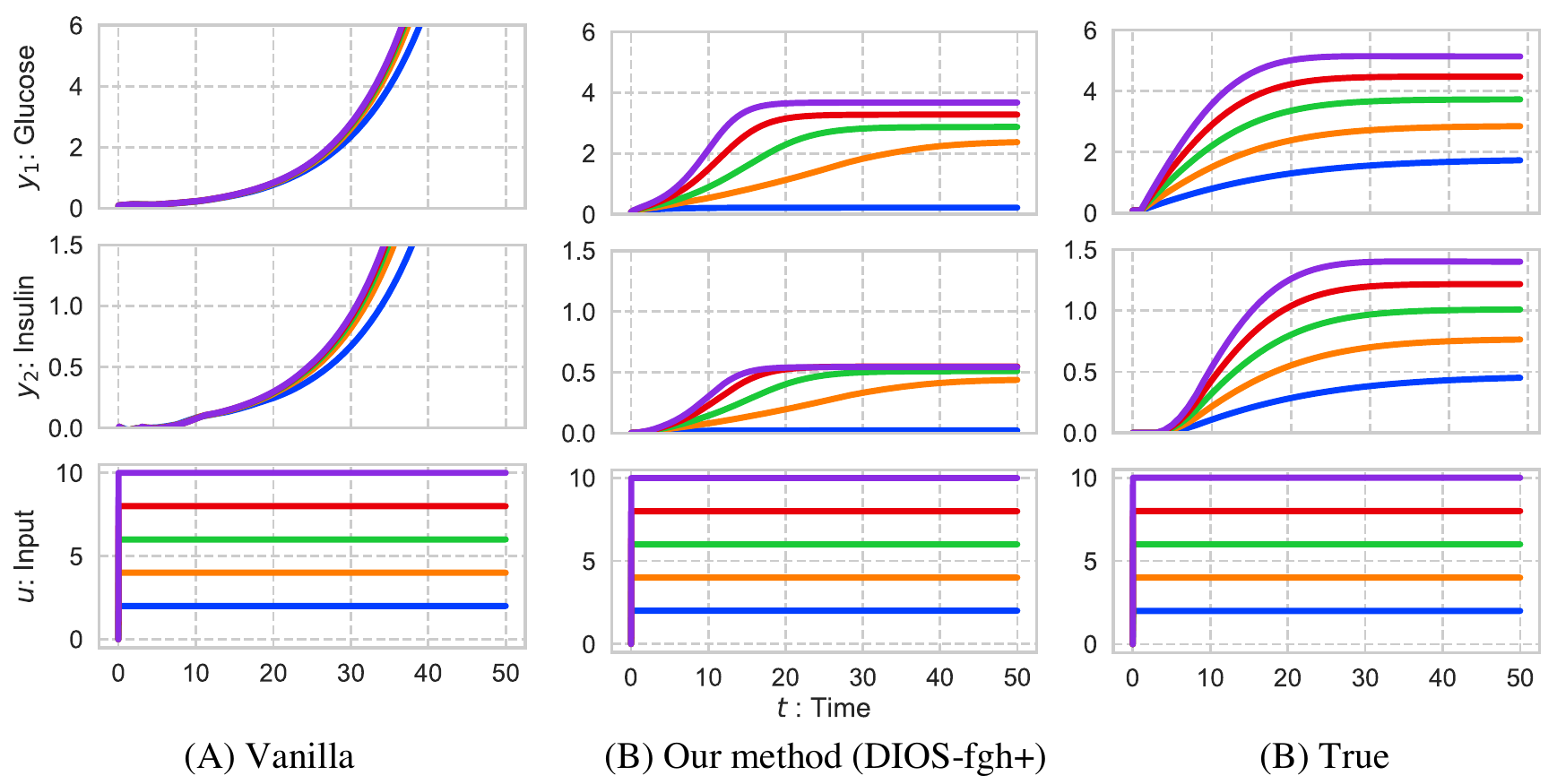}
    \caption{The step reaction of the trained systems. The color of each line indicates the magnitude of the step input.}
    \label{fig:glcose_insulin_model_step_reaction}
\end{figure}

The RMSE of vanilla and our proposed method are 0.0103 and 0.0050, respectively.
So, from the perspective of RMSE, these methods achieved almost the same performance.
Figure~\ref{fig:glcose_insulin_model_reaction} shows input and output signals in the test dataset and the predicted output by vanilla neural networks and our method (DIOS-fgh+).
The $\mathcal{L}_2$ stability of the system using the vanilla neural networks is not guaranteed.
Since the proposed method guarantees the $\mathcal{L}_2$ stability, the output signals of  DIOS-fgh+ are bounded even if the input signals are unexpectedly large.

To demonstrate this, we conducted an additional experiment using the trained system.
Figure~\ref{fig:glcose_insulin_model_step_reaction} shows the transition of output behavior caused by the magnitude of the input signal changing from 2 to 10.
Note that the maximum magnitude in the training dataset is one.
In this experiment, $\Delta t$ was changed to $0.01$ and the clipping operation was removed to deal with the large values of the state.

This result shows the output of the vanilla neural networks quickly diverged with an unexpectedly large input.
Contrastingly, the output behavior of our proposed method is always bounded.
Therefore, we actually confirmed that our proposed method satisfies the $\mathcal{L}_2$ stability.


\section{Conclusion}
This paper proposed a learning method for nonlinear dynamical system guaranteeing the $\mathcal{L}_2$ stability.
By theoretically deriving the projection of a triplet $(f, G, h)$ to the space satisfying the Hamilton-Jacobi inequality, our proposed method realized the $\mathcal{L}_2$ stability of trained systems.
Also, we introduced a loss function to empirically achieve a smaller $\mathcal{L}_2$ gain while reducing prediction errors.
We conducted two experiments to learn dynamical systems consisting of neural networks.
The first experiment used a nonlinear model with multiple asymptotically equilibrium points.
The result of this experiment showed that our proposed method can robustly estimate a system with multiple stable points.
In the next experiment, we applied our method to a biological system using a simulator of the glucose-insulin system.
It was confirmed that the proposed method can successfully learn a proper system that works well under unexpectedly large inputs due to the $\mathcal{L}_2$ stability.
There is a limitation that our method cannot apply the system without the $\mathcal{L}_2$ stability.
Future work will expand the $\mathcal{L}_2$ stability-based method to a more generalized learning method by dissipativity and apply our approach of this study to stochastic systems.

\section*{Acknowledgements}
This research was supported by JST Moonshot R\&D Grant Number JPMJMS2021 and JPMJMS2024. This work was also supported by JSPS KAKENHI Grant No.21H04905 and CREST Grant Number JPMJCR22D3, Japan. This paper was also based on a part of results obtained from a project commissioned by the New Energy and Industrial Technology Development Organization (NEDO).

{
\small
\bibliographystyle{unsrt}
\bibliography{references}
}

\ifpreprint

\section*{Checklist}

The checklist follows the references.  Please
read the checklist guidelines carefully for information on how to answer these
questions.  For each question, change the default \answerTODO{} to \answerYes{},
\answerNo{}, or \answerNA{}.  You are strongly encouraged to include a {\bf
justification to your answer}, either by referencing the appropriate section of
your paper or providing a brief inline description.  For example:
\begin{itemize}
  \item Did you include the license to the code and datasets? \answerYes{See Section~2.}
  \item Did you include the license to the code and datasets? \answerNo{The code and the data are proprietary.}
  \item Did you include the license to the code and datasets? \answerNA{}
\end{itemize}
Please do not modify the questions and only use the provided macros for your
answers.  Note that the Checklist section does not count towards the page
limit.  In your paper, please delete this instructions block and only keep the
Checklist section heading above along with the questions/answers below.

\begin{enumerate}

\item For all authors...
\begin{enumerate}
  \item Do the main claims made in the abstract and introduction accurately reflect the paper's contributions and scope?
    \answerYes{See Section~1}
  \item Did you describe the limitations of your work?
    \answerYes{See Section~6}
  \item Did you discuss any potential negative societal impacts of your work?
    \answerNo{}
  \item Have you read the ethics review guidelines and ensured that your paper conforms to them?
    \answerYes{}
\end{enumerate}

\item If you are including theoretical results...
\begin{enumerate}
  \item Did you state the full set of assumptions of all theoretical results?
    \answerYes{See Section~2}
        \item Did you include complete proofs of all theoretical results?
    \answerYes{See the supplemental material (Appendix.pdf)}
\end{enumerate}

\item If you ran experiments...
\begin{enumerate}
  \item Did you include the code, data, and instructions needed to reproduce the main experimental results (either in the supplemental material or as a URL)?
    \answerYes{See the supplemental material. Our codes contains README.md to reproduce the our experimental results.}
  \item Did you specify all the training details (e.g., data splits, hyperparameters, how they were chosen)?
    \answerYes{See Section~5}
        \item Did you report error bars (e.g., with respect to the random seed after running experiments multiple times)?
    \answerYes{See Section~5}
        \item Did you include the total amount of compute and the type of resources used (e.g., type of GPUs, internal cluster, or cloud provider)?
    \answerYes{See Section~5.1}
\end{enumerate}

\item If you are using existing assets (e.g., code, data, models) or curating/releasing new assets...
\begin{enumerate}
  \item If your work uses existing assets, did you cite the creators?
    \answerNA{}
  \item Did you mention the license of the assets?
    \answerYes{See the supplemental materials. Our codes contains LICENSE.txt.}
  \item Did you include any new assets either in the supplemental material or as a URL?
    \answerYes{See the supplemental materials containing our source codes}
  \item Did you discuss whether and how consent was obtained from people whose data you're using/curating?
    \answerNA{Our dataset was reproduced by ourselves based on the literature}
  \item Did you discuss whether the data you are using/curating contains personally identifiable information or offensive content?
    \answerYes{}
\end{enumerate}

\item If you used crowdsourcing or conducted research with human subjects...
\begin{enumerate}
  \item Did you include the full text of instructions given to participants and screenshots, if applicable?
    \answerNA{}
  \item Did you describe any potential participant risks, with links to Institutional Review Board (IRB) approvals, if applicable?
    \answerNA{}
  \item Did you include the estimated hourly wage paid to participants and the total amount spent on participant compensation?
    \answerNA{}
\end{enumerate}

\end{enumerate}
\fi


\appendix
\newpage

\section{Proof of Theorem~1 and Corollary~2}\label{APP1}
To prove Theorem~\ref{Thm:optimization} and Corollary~\ref{Coro:optimization_fix_h}, we use the following particular solution of QCQP problems.
\begin{lemma}\label{lem:QCQP}
Suppose that $A \in \R^{n\times n}$ is a positive definite matrix, a vector $b\in \R^{n}$, a scalar $c$, and positive constants $k_{\rm x}$, $k_{\rm y}$.
The solution of this problem
\begin{align*}
    \text{\bf minimize} \quad &k_{\rm x}x^\T A x  +  k_{\rm y}|y|\\ 
    \text{\bf subject to}\quad   & y \geq x^\T A x - 2b^\T x + c.
\end{align*}
is given by
\begin{align*}
x^* &= \left(1 - \sqrt{\mathrm{C}(1 - \tilde{c};\tilde{k}_{\rm x}^2,1)} \right)A^{-1}b,\\
y^* &= \mathrm{R}\left(c -  (1 - \tilde{k}_x^2)b^\T A^{-1}b\right),
\end{align*}
where
\begin{align*}
    \tilde{c} = \frac{c}{b^\T A^{-1}b},\quad \tilde{k}_{\rm x} = \frac{k_{\rm x}}{k_{\rm x} + k_{\rm y}}.
\end{align*}
\end{lemma}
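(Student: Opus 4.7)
My approach is to first eliminate $y$ from the program, reducing it to a strictly convex univariate-in-$x$ problem, then perform a case analysis on whether the constraint-induced ramp is active at the optimum, and finally merge the sub-cases into the stated closed form via the clamp function. Because $|y|$ enters the objective with positive coefficient $k_{\rm y}$ and the constraint $y \geq g(x) := x^\T A x - 2 b^\T x + c$ is one-sided, for each fixed $x$ the unique optimal $y$ is $y^*(x) = \mathrm{R}(g(x))$: if $g(x) > 0$ the constraint forces $y = g(x)$, and otherwise $y = 0$ is feasible and minimises $|y|$. Substituting eliminates $y$ and leaves the unconstrained minimisation of $F(x) := k_{\rm x}\, x^\T A x + k_{\rm y}\,\mathrm{R}(g(x))$, which is strictly convex because $A$ is positive definite and $\mathrm{R}\circ g$ is convex.

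\textbf{Case analysis at the optimum.} Since $F$ has a unique minimiser $x^*$, I split on whether the ramp is active at $x^*$. In the \emph{active} case $g(x^*) > 0$, $F$ is locally a smooth quadratic $(k_{\rm x}+k_{\rm y})\,x^\T A x - 2 k_{\rm y} b^\T x + k_{\rm y} c$, so $\nabla F = 0$ gives $x^* = (1-\tilde{k}_{\rm x}) A^{-1} b$; a direct substitution yields $g(x^*) = c - (1-\tilde{k}_{\rm x}^2)\, b^\T A^{-1} b$, which is positive precisely when $\tilde c > 1 - \tilde{k}_{\rm x}^2$, confirming self-consistency of this case. In the \emph{inactive} case $g(x^*) \leq 0$, the problem collapses to minimising $x^\T A x$ subject to $g(x) \leq 0$. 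When $c \leq 0$, $x^* = 0$ is feasible and optimal; when $c > 0$, the KKT stationarity condition $2 A x = \mu\,(2 A x - 2 b)$ forces $x^*$ onto the ray $\{t A^{-1} b : t \in \R\}$, reducing the boundary equation $g = 0$ to the scalar quadratic $t^2 - 2 t + \tilde c = 0$ whose smaller root $t = 1 - \sqrt{1-\tilde c}$ is chosen because it minimises $t^2 b^\T A^{-1} b = x^\T A x$.

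\textbf{Unification and main obstacle.} To merge the three regimes into the stated formula, I verify the threshold behaviour of the clamp. On $\tilde c > 1 - \tilde{k}_{\rm x}^2$, on $0 \leq \tilde c \leq 1 - \tilde{k}_{\rm x}^2$, and on $\tilde c < 0$ the quantity $\mathrm{C}(1-\tilde c;\,\tilde{k}_{\rm x}^2, 1)$ evaluates to $\tilde{k}_{\rm x}^2$, $1 - \tilde c$, and $1$ respectively, so the prefactor $1 - \sqrt{\mathrm{C}(\cdot)}$ becomes $1 - \tilde{k}_{\rm x}$, $1 - \sqrt{1-\tilde c}$, and $0$ in the three regimes, exactly reproducing $x^*$ from the case analysis. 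Similarly, $\mathrm{R}\bigl(c - (1-\tilde{k}_{\rm x}^2)\, b^\T A^{-1} b\bigr)$ returns the positive value of $g(x^*)$ in the active case and vanishes in both inactive cases, matching $y^* = \mathrm{R}(g(x^*))$. The main obstacle is precisely this alignment step: one must check that the clamp thresholds $\tilde{k}_{\rm x}^2$ and $1$ agree with the case boundaries $\tilde c = 1 - \tilde{k}_{\rm x}^2$ and $\tilde c = 0$, and that the smaller root of the boundary quadratic is the correct one; everything else reduces to gradient calculations and a single KKT argument.
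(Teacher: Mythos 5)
Your proof is correct and follows essentially the same route as the paper's (Lemmas~\ref{lem:QCQP_lemma1}--\ref{lem:QCQP_lemma3}): the same split into the regimes $\tilde{c} > 1-\tilde{k}_{\rm x}^2$, $0 < \tilde{c} \leq 1-\tilde{k}_{\rm x}^2$, $c \leq 0$, the same stationary-point and Lagrange-multiplier computations, and the same clamp/ramp unification. The only organizational difference is that you eliminate $y$ upfront via $y^*(x) = \mathrm{R}\bigl(x^\T A x - 2b^\T x + c\bigr)$, which neatly absorbs the paper's separate argument ruling out the negative-sign sub-case.
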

\begin{proof}
See Appendix~\ref{APP2}.
\end{proof}
To keep the notation short in the following proofs, we define 
\begin{align*}
\beta \triangleq \nabla V.
\end{align*}

{\bf Proof of  Theorem~2:}
For the Hamilton-Jacobi function $\mathrm{HJ}(\fm,\Gm,\hm)$ of the modified dynamics $(\fm,\Gm,\hm)$, the map $\fm$ and each row of the map $\Gm$ don't depend on the orthogonal vector of $\beta$.
Hence, $\fm$ and $\Gm$ are written as
\begin{align*}
    \fm = \fn - \beta p,\quad
    \Gm = \Gn - \beta q^\T,
\end{align*}
where $p$ is scalar and $q\in \R^m$.
$\hm$ depends only on the norm in $\mathrm{HJ}(\fm,\Gm,\hm)$, i. e.,
\begin{align*}
\hm =  (1 - r)hn,
\end{align*}
where $r$ is scalar.

The optimal function~(\ref{Eq:thm_optimization_minimize}) is  rewritten as the $p$, $q$, and $r$, i.e.,
\begin{align*}
    &\frac{k_1}{\|\beta\|}\|\fm-\fn\| + \frac{k_2}{2 \gamma^2} \|\Gm-\Gn\|^2 + \frac{k_2}{2 \|\beta\|^2}\|\hm - \hn\| \\
    &=k_1 |p| + k_2\frac{\|\beta\|^2}{2 \gamma^2} \|q\|^2  + k_2 \frac{\|\hn\|}{2 \|\beta\|^2} r^2.
\end{align*}
Also, the condition of constraint~(\ref{Eq:thm_optimization_subject}) is rewritten as
\begin{align*}
&\mathrm{HJ}_{\beta} (\fm,\Gm,\hm)\\
&=\beta^\T \fm + \frac{1}{2\gamma^2}\| \Gm^\T \beta \|^2 + \frac{1}{2}\| \hm\|^2\\
&=\beta^\T (\fn  - \beta p) + \frac{1}{2\gamma^2}\| (\Gn -\beta q^\T) ^\T \beta \|^2 +\frac{1}{2}\Big\|(1-r)\hn\|^2 \\
&= -\|\beta\|^2 p + \frac{1}{2\gamma^2}\|\beta \|^4 \|q\|^2  + \frac{1}{2}\|\hat{h}\|^2 r^2 - \frac{1}{\gamma^2} \| \beta\|^2 \beta^\T \Gn q  - \|\hn\|^2 r + \mathrm{HJ}_{\beta} (\fn,\Gn,\hn)\\
&\leq 0.
\end{align*}

If $x \triangleq [q^\T, r]^\T$, $y \triangleq p$, and
\begin{align*}
A \triangleq 
\begin{bmatrix}
    \frac{\|\beta\|^2}{2\gamma^2} I_m & 0\\
    0 & \frac{\|\hn \|^2}{2\|\beta\|^2}
\end{bmatrix}, \quad
b \triangleq
\begin{bmatrix}
    \frac{1}{2\gamma^2}\Gn^\T \beta \\
    \frac{\|\hn\|^2}{2\|\beta\|^2}
\end{bmatrix}\quad c \triangleq \frac{1}{\|\beta\|^2}\Big(\mathrm{HJ}_\beta(\fn,\Gn,\hn) \Big), 
\end{align*}
then this optimal problem~(\ref{Eq:thm_optimization}) of this theorem becomes a problem used in Lemma~\ref{lem:QCQP}.
The optimal point of $p$, $q$ and $r$ is given by
\begin{align*}
    p^* &=  \frac{1}{\|\beta\|^2}\mathrm{R}\left(V_{\hat{f}} + \hat{k}^2 V_{\Gn, \hn}\right),\\
    q^* &= \frac{1}{\|\beta\|^2}\left(1 - \sqrt{\mathrm{C}\Big( -\tfrac{V_{\fn}}{V_{\Gn,\hn}};\tilde{k}^2,1\Big)} \right)\Gn^\T\beta,\\
    r^* &= 1 - \sqrt{\mathrm{C}\Big( - \tfrac{V_{\fn}}{V_{\Gn,\hn}};\tilde{k}^2,1\Big)}.
\end{align*}
Therefore, Theorem~\ref{Thm:optimization} is derived.

{\bf Proof of  Theorem~2:}

Optimal maps $\fm$ and $\Gm$ are also written as
\begin{align*}
    \fm =  \fn - \beta p,\quad
    \Gm =  \Gn - \beta q^\T.
\end{align*}
Hence, the objective function~(\ref{Eq:coro2_optimization_minimize}) is rewritten as $p$ and $q$ function, i.e.,
\begin{align*}
    &\frac{k_1}{\|\beta\|}\|\fm-\fn\| + \frac{k_2}{2 \gamma^2} \|\Gm-\Gn\|^2 = k_1 |p| + k_2\frac{\|\beta\|^2}{2 \gamma^2} \|q\|^2.
\end{align*}
Also the condition of constraint~(\ref{Eq:coro2_optimization_subject}) is written
\begin{align*}
&\mathrm{HJ}_{\beta} (\fm,\Gm,\hn)\\
&=\beta^\T \fm + \frac{1}{2\gamma^2}\| \Gm^\T \beta \|^2 + \frac{1}{2}\| \hm\|^2\\
&=\beta^\T (\fn  - \beta p) + \frac{1}{2\gamma^2}\| (\Gn -\beta q^\T) ^\T \beta \|^2 +\frac{1}{2}\|\hn\|^2\\
&= -\|\beta\|^2 p + \frac{1}{2\gamma^2}\|\beta \|^4 \|q\|^2  - \frac{1}{\gamma^2} \| \beta\|^2 \beta^\T \Gn q + \mathrm{HJ}_{\beta} (\fn,\Gn,\hn)\\
&\leq 0.
\end{align*}
If $x \triangleq q$, $y \triangleq p$, and
\begin{align*}
A \triangleq \frac{\|\beta\|^2}{2\gamma^2} I_m,\quad 
b \triangleq \frac{1}{2\gamma^2}\Gm^\T \beta,
\quad c \triangleq \frac{1}{\|\beta\|^2}\Big(\mathrm{HJ}_\beta(\fn,\Gn,\hn) \Big),
\end{align*}
then this optimal problem~(\ref{Eq:coro2_optimization}) becomes the problem used in Lemma~\ref{lem:QCQP}.
The optimal points of $p$ and $q$ are given by
\begin{align*}
    p^* &=  \frac{1}{\|\beta\|^2}\mathrm{R}\left(V_{\fn, \hn}  +  \hat{k}_2^2(V_{\Gn})\right),\\
    q^* &= \frac{1}{\|\beta\|^2}\left(1 - \sqrt{\mathrm{C}\Big( -\tfrac{V_{\fn,\hn}}{V_{\Gn}};\tilde{k}^2,1\Big)} \right)\Gn^\T\beta,
\end{align*}
Therefore, the solution of problem~(\ref{Eq:coro2_optimization}) becomes Eqs.~(\ref{Eq:coro_optimization_solution}).

\section{Proof of the particular solution of QCQP}\label{APP2}
This section presents the proof of the following QCQP problem.
\begin{subequations}
\begin{align}
\text{\bf minimize} \quad &k_{\rm x}x^\T A x  +  k_{\rm y}|y| \label{Eq:QCQP_objective}\\ 
\text{\bf subject to}\quad   & y \geq x^\T A x - 2b^\T x + c, \label{Eq:QCQP_constraint}
\end{align}\label{Eq:QCQP}
\end{subequations}
where $A$ is a positive definite matrix.
We classify this problem according to the parameter $A, b$ and $c$.

First, the solution of this optimal problem is switched depending on the positive or negative value of $c$.
\begin{lemma}\label{lem:QCQP_lemma1}
If $c\leq 0$ then the solution of Eq.~(\ref{Eq:QCQP}) is $(x,y)=(0,0)$.
If $c > 0$, the solution $x^*$ of Eq.~(\ref{Eq:QCQP}) equals the solution of the following problem:
\begin{align}
\text{\bf minimize} \quad &k_x x^\T A x  +  k_y|x^\T A x - 2b^\T x + c|.\label{Eq:QCQP_lemma1}
\end{align}
Furthermore, the solution $y^*$  is given by
\begin{align}
    y^* = k_x x^{*\T} A x^* - 2b^\T x^* + c. \label{Eq:QCQP_lemma1y}
\end{align}
\end{lemma}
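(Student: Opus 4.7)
The plan is to partially minimize over $y$ for each fixed $x$, reducing the QCQP to a single-variable problem in $x$, and then to verify that the resulting one-sided penalty can be replaced by an absolute value without changing the minimizer.

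For the case $c \le 0$: the point $(x,y) = (0,0)$ is feasible, since the constraint reads $0 \ge c$, and the objective $k_{\rm x}\, x^\T A x + k_{\rm y}\,|y|$ is a sum of two non-negative terms by positive definiteness of $A$. Hence $(0,0)$ is trivially optimal and achieves objective value $0$. This disposes of the first bullet.

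For $c > 0$, write $g(x) \triangleq x^\T A x - 2 b^\T x + c$. Fixing $x$ and optimizing over $y$ under the constraint $y \ge g(x)$, the smallest feasible $|y|$ equals $\max\{0, g(x)\}$, so the original problem reduces to $\min_{x}\; k_{\rm x}\, x^\T A x + k_{\rm y}\,\max\{0, g(x)\}$. The statement replaces this hinge by $|g(x)|$, so I have to show that these two reduced objectives share the same minimizer. I would claim each infimum is attained on $\{x : g(x) \ge 0\}$: given any $x$ with $g(x) < 0$, contract along the ray $x_\alpha = \alpha x$ for $\alpha \in [0,1]$. Since $g(0) = c > 0 > g(x)$, continuity supplies some $\alpha^* \in (0,1)$ with $g(x_{\alpha^*}) = 0$; at this intermediate point the quadratic term has strictly decreased to $(\alpha^*)^2 k_{\rm x}\, x^\T A x$ and the penalty term has dropped to zero, so both objectives strictly improve. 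Thus neither minimizer lies in $\{g < 0\}$, the two objectives agree on $\{g \ge 0\}$, and their minimizers coincide at a common $x^*$. The tightest feasible $y$ is then $y^* = g(x^*) \ge 0$, which is exactly (\ref{Eq:QCQP_lemma1y}).

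The main obstacle is precisely the ray-contraction step that aligns the hinge and absolute-value objectives at the optimum; this is where the hypothesis $c > 0$ is essential, since $g(0) = c$ is what forces the segment from the origin to any infeasible $x$ to cross the level set $\{g = 0\}$ before reaching $0$. Everything else is routine: a clean partial minimization over $y$ and a non-negativity argument using the positive definiteness of $A$.
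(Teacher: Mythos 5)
Your proof is correct, and it takes a somewhat different route from the paper's. The paper handles $c>0$ by observing that the unconstrained minimizer $(x,y)=(0,0)$ of the convex objective is infeasible, so the constrained optimum must lie on the boundary $y=x^\T Ax-2b^\T x+c$ of the feasible set; substituting this active constraint into $k_{\rm y}|y|$ immediately produces the absolute-value problem~(\ref{Eq:QCQP_lemma1}) and the formula for $y^*$. You instead marginalize out $y$ for fixed $x$, which yields the hinge penalty $k_{\rm y}\max\{0,g(x)\}$ rather than $k_{\rm y}|g(x)|$, and then reconcile the two via the ray-contraction argument showing that no minimizer of either reduced objective can satisfy $g(x)<0$ (using $g(0)=c>0$ and the strict decrease of the quadratic term along $\alpha x$). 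Your version is more self-contained: it does not invoke the boundary-attainment fact as a black box, and it explicitly establishes $g(x^*)\ge 0$, i.e.\ that the constraint is active at the optimum, which the paper takes for granted; the cost is the extra step needed to pass from the hinge to the absolute value, a step the paper's substitution avoids. The $c\le 0$ case is identical in both arguments. One small remark: what you derive is $y^*=x^{*\T}Ax^*-2b^\T x^*+c$, without the factor $k_x$ appearing in the displayed equation~(\ref{Eq:QCQP_lemma1y}); that factor is a typo in the statement (the paper's own summary lemma in Appendix~\ref{APP2} uses the formula without $k_x$), so your conclusion matches the intended claim.
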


\begin{proof}
The objective function~(\ref{Eq:QCQP_objective}) is strictly convex and the minimum point is $(x,y)=(0,0)$. 
If $c\leq 0$, the origin $(x,y)=(0,0)$ satisfies the condition of constraint~(\ref{Eq:QCQP_constraint}).
Therefore, the solution of Eq.~(\ref{Eq:QCQP}) is $(x,y)=(0,0)$.

If $c>0$, $(x,y)=(0,0)$ does not satisfies the constraint condition~(\ref{Eq:QCQP_constraint}), and the optimal solution belong the boundary of the region satisfying~(\ref{Eq:QCQP_constraint}).
Therefore, the solution point $(x^*,y^*)$ satisfies Eq.~(\ref{Eq:QCQP_lemma1y}).
\end{proof}

Also, the solution of the new optimal problem~(\ref{Eq:QCQP_lemma1}) is switched by the ratio between $c$ and $b^\T A^{-1}b$.

\begin{lemma}\label{lem:QCQP_lemma2}
If $\tilde{c} > 1 - \tilde{k}_x^2$ then the solution $x^*$ of the optimal problem~(\ref{Eq:QCQP_lemma1}) is given by
\begin{align}
    x^* = (1 - \tilde{k}_x)A^{-1}b,\label{Eq:lemma2x}
\end{align}
where
\begin{align*}
    \tilde{c} = \frac{c}{b^\T A^{-1}b},\quad \tilde{k}_x = \frac{k_x}{k_x + k_y}.
\end{align*}

If $\tilde{c}\leq 1 - \tilde{k}_x^2$, the solution $x^*$ equals the solution of the following QCQP problem, such that
\begin{subequations}
\begin{align}
    \text{\bf minimize} \quad & x^\T A x \label{Eq:QCQP_lemma2_objective}\\
    \text{\bf subject to}\quad   &x^\T A x - 2b^\T x + c=0.\label{Eq:QCQP_lemma2_constract}
\end{align}\label{Eq:QCQP_lemma2}
\end{subequations}
\end{lemma}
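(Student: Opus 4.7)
I recast the objective in a form that exposes the case split. Set $g(x) \triangleq x^{\T} A x - 2 b^{\T} x + c$ so that (\ref{Eq:QCQP_lemma1}) reads $\min_x F(x)$ with $F(x) = k_x x^{\T} A x + k_y |g(x)| = \max\bigl(F_+(x), F_-(x)\bigr)$, where $F_{\pm}(x) \triangleq k_x x^{\T} A x \pm k_y g(x)$. The quadratic $F_+$ has Hessian $2(k_x + k_y) A \succ 0$, so it is strictly convex with unique minimizer $\hat{x}_+ = \tfrac{k_y}{k_x+k_y} A^{-1} b = (1 - \tilde{k}_x) A^{-1} b$. Completing the square yields $g(\hat{x}_+) = c - (1 - \tilde{k}_x^2)\, b^{\T} A^{-1} b$, so the sign of $g(\hat{x}_+)$ is controlled precisely by whether $\tilde{c}$ exceeds $1 - \tilde{k}_x^2$; this is the trigger for the case split.

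\textbf{Case $\tilde{c} > 1 - \tilde{k}_x^2$.} Then $g(\hat{x}_+) > 0$. Since $|g| \geq g$ with equality on $\{g \geq 0\}$, one has $F \geq F_+$ pointwise and $F(\hat{x}_+) = F_+(\hat{x}_+)$. The chain $F(\hat{x}_+) = F_+(\hat{x}_+) = \min F_+ \leq F_+(x) \leq F(x)$ holds for every $x$, showing $\hat{x}_+$ is globally optimal; this is exactly the content of (\ref{Eq:lemma2x}).

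\textbf{Case $\tilde{c} \leq 1 - \tilde{k}_x^2$.} Now $g(\hat{x}_+) \leq 0$, and I claim every optimizer $x^*$ of (\ref{Eq:QCQP_lemma1}) satisfies $g(x^*) = 0$. Granting the claim, $F(x^*) = k_x (x^*)^{\T} A x^*$ on this zero level set, so (\ref{Eq:QCQP_lemma1}) and (\ref{Eq:QCQP_lemma2}) share the same optimizer. To establish the claim I rule out each strict sign. If $g(x^*) > 0$, then in a neighborhood $F \equiv F_+$, which forces $x^* = \hat{x}_+$ by strict convexity of $F_+$, contradicting $g(\hat{x}_+) \leq 0$. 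If $g(x^*) < 0$, then locally $F \equiv F_-$, so $x^*$ would be an interior local minimum of $F_-$ on the bounded open ellipsoid $\{g < 0\}$; I dispatch this by splitting on $\mathrm{sgn}(k_x - k_y)$. When $k_x < k_y$, the Hessian $2(k_x - k_y) A$ is negative definite so $F_-$ has no interior minimum. When $k_x = k_y$, $F_-$ is affine and either trivial ($b = 0$, handled directly) or has no interior minimum. When $k_x > k_y$, $F_-$ has a unique critical point $\hat{x}_- = -\tfrac{k_y}{k_x - k_y} A^{-1} b$, for which a direct computation yields $g(\hat{x}_-) = c + \tfrac{k_y(2 k_x - k_y)}{(k_x - k_y)^2}\, b^{\T} A^{-1} b$, strictly positive since $c > 0$ inherited from Lemma~\ref{lem:QCQP_lemma1}; hence $\hat{x}_- \notin \{g < 0\}$.

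\textbf{Main obstacle.} The delicate step is ruling out an interior optimum of $F_-$ inside $\{g < 0\}$ when $k_x > k_y$: here $F_-$ is genuinely strictly convex with a single critical point, and the argument requires showing that this extraneous critical point actually lies outside the ellipsoid. This is precisely where the positivity of $c$ carried over from Lemma~\ref{lem:QCQP_lemma1} becomes essential.
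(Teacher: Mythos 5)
Your proof is correct and follows essentially the same route as the paper's: the same case split on the sign of $x^{\T}Ax - 2b^{\T}x + c$ at the optimum, the same two candidate points $(1-\tilde{k}_x)A^{-1}b$ and $-\tfrac{k_y}{k_x-k_y}A^{-1}b$, and the same use of $c>0$ to rule out an optimizer with strictly negative constraint value. Your max-of-two-quadratics formulation and the concavity/affine arguments for $k_x \le k_y$ merely make rigorous the step the paper dismisses with ``there is no optimal point.''
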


\begin{proof}
The optimal problem~(\ref{Eq:QCQP_lemma1}) is split on the sign of $x^{*\T} A x^* - 2b^\T x^* + c$.

{\bf Case}~$x^{*\T} A x^* - 2b^\T x^* + c < 0$:

The optimal problem of this case is written as
\begin{align*}
\text{\bf minimize} \quad &(k_x - k_y ) x^{\T} A x + 2k_y b^\T x - k_yc.
\end{align*}
If $k_x - k_y \leq 0$, there is no optimal point.
Otherwise $k_x - k_y > 0$, the optimal point is written as
\begin{align*}
    x^* =  -\frac{k_y}{k_x - k_y}A^{-1}b.
\end{align*}
The optimal point does not satisfies the condition $x^{*\T} A x^* - 2b^\T x^* + c < 0$, because 
\begin{align*}
    &x^{*\T} A x^* - 2b^\T x^* + c\\
    &=\frac{k_y^2}{(k_x - k_y)^2} b^\T A^{-1}b + \frac{2k_y}{k_x - k_y} b^\T A^{-1}b + c > 0,
\end{align*}
where $A^{-1}$ is a positive definite matrix and $c>0$.

{\bf Case}~$x^{*\T} A x^* - 2b^\T x^* + c > 0$:

The problem~(\ref{Eq:QCQP_lemma1}) is written as 
\begin{align*}
\text{\bf minimize} \quad &(k_x + k_y ) x^\T A x - 2k_y b^\T x + k_yc.
\end{align*}
Hence, this optimal point is written as 
\begin{align*}
x^*&= \frac{k_y}{k_x + k_y}A^{-1}b\\
&= (1 - \tilde{k}_x)A^{-1}b,
\end{align*}
where $\sqrt{A}$ is a square root of the positive definite matrix $A$.
The condition $x^{*\T} A x^* - 2b^\T x^* + c > 0$ is rewritten by the previous optimal solution, such that
\begin{align*}
    &x^{*\T} A x^* - 2b^\T x^* + c\\
    &= (1 - \tilde{k}_x)^2  b^\T A^{-1}b - 2 (1 - \tilde{k}_x)  b^\T A^{-1}b + c\\
    &=  - \left(1 - \tilde{k}_x^2\right) b^\T A^{-1}b + c > 0\\
    &\Leftrightarrow \tilde{c} > 1 - \tilde{k}_x^2.
\end{align*}
{\bf Case}~$x^{\T} A x^* - 2b^\T x^* + c = 0$:
The problem~(\ref{Eq:QCQP_lemma1}) is written as the QCQP problem~(\ref{Eq:QCQP_lemma2}).
\end{proof}

The solution of the simple QCQP problem~(\ref{Eq:QCQP_lemma2}) is easily derived using the method of Lagrange multiplier.

\begin{lemma}\label{lem:QCQP_lemma3}
The solution of the simple QCQP problem~(\ref{Eq:QCQP_lemma2}) is given by
\begin{align*}
x^* = \left( 1 - \sqrt{1 - \tilde{c}}\right)A^{-1} b.
\end{align*}
\end{lemma}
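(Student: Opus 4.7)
\textbf{Proof plan for Lemma~\ref{lem:QCQP_lemma3}.} The plan is to apply the method of Lagrange multipliers to the equality-constrained QCQP~(\ref{Eq:QCQP_lemma2}). First I would form the Lagrangian
\begin{align*}
L(x,\lambda) = x^\T A x + \lambda\bigl(x^\T A x - 2b^\T x + c\bigr),
\end{align*}
and set $\nabla_x L = 2(1+\lambda)Ax - 2\lambda b = 0$. Since $A$ is positive definite (and $\lambda\neq -1$ is forced by feasibility), this gives the stationary-point parametrization $x = \mu\, A^{-1} b$ with $\mu \triangleq \lambda/(1+\lambda)$.

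Next I would substitute $x=\mu A^{-1}b$ into the constraint~(\ref{Eq:QCQP_lemma2_constract}). Using $b^\T A^{-1} b$ as a common factor and dividing by it, I get a scalar quadratic in $\mu$:
\begin{align*}
\mu^2 - 2\mu + \tilde c = 0, \qquad \tilde c = c/(b^\T A^{-1}b),
\end{align*}
whose roots are $\mu_\pm = 1 \pm \sqrt{1-\tilde c}$. Note that $1-\tilde c \geq 0$ is inherited from the regime $\tilde c \leq 1-\tilde k_x^2 \leq 1$ identified in Lemma~\ref{lem:QCQP_lemma2}, so both roots are real; this is the one small sanity check I would flag explicitly.

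Finally I would compare the objective at the two candidates: since $x^\T A x = \mu^2\, b^\T A^{-1} b$ and $\mu_-^2 < \mu_+^2$ (because $\mu_\pm \geq 0$ and $\mu_- \leq \mu_+$), the minimizer is $\mu_- = 1 - \sqrt{1-\tilde c}$, yielding $x^* = (1-\sqrt{1-\tilde c})\, A^{-1}b$ as claimed. The only mild obstacle is the root-selection step: I would argue it cleanly by noting that the feasible set is the boundary of a non-degenerate ellipsoid (by positive definiteness of $A$) and that the objective $x^\T A x$ is the squared $A$-norm, whose two intersections with the line $\{\mu A^{-1}b : \mu\in\R\}$ through the interior point $A^{-1}b$ correspond precisely to $\mu_\pm$, with the one closer to the origin being $\mu_-$. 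No other stationary points exist since $A$ is invertible, so $\mu_-$ is the global minimum.
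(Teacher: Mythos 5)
Your proposal is correct and takes essentially the same route as the paper: both form the Lagrangian $x^\T A x + \lambda(x^\T A x - 2b^\T x + c)$, parametrize the stationary points as multiples of $A^{-1}b$, and solve the resulting scalar quadratic from the constraint (yours in $\mu=\lambda/(1+\lambda)$, the paper's in $\lambda$ itself), arriving at the same minimizer. The only difference is in root selection: the paper picks the root by positing $\lambda>0$, whereas you compare the objective values $\mu_\pm^2\, b^\T A^{-1}b$ directly, which is a slightly more self-contained justification of the same step.
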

\begin{proof}
Supposing a Lagrange multiplier $\lambda >0$, the Lagrange function is written as
\begin{align*}
L(x,\lambda) =  x^{\T} A x  + \lambda (x^\T A x - 2 b^\T x + c).
\end{align*}
The KKT condition is given by
\begin{align}
\frac{\partial L(x^*,\lambda)}{\partial x} &=  2(1 + \lambda )A x^* - 2\lambda  b =0. \label{Eq:Lagrange_1}\\
\frac{\partial L(x^*,\lambda)}{\partial \lambda} &=  x^{*\T} A x^* - 2 b^\T x^* + c =0, \label{Eq:Lagrange_2}
\end{align}
Eq.~(\ref{Eq:Lagrange_1}) is written as
\begin{align*}
    x^* &=  \frac{\lambda}{1+\lambda} A^{-1}b,
\end{align*}
and the Eq.~(\ref{Eq:Lagrange_2}) is given by
\begin{align*}
    &x^{*\T} A x^* - 2 b^\T x^* + c =0\\
    \Leftrightarrow&\frac{\lambda^2}{(1+\lambda)^2}b^\T A^{-1}b - \frac{2\lambda}{1+\lambda}b^\T A^{-1}b + c = 0\\
    \Leftrightarrow &  - \frac{\lambda^2 + 2\lambda}{(1+\lambda)^2}b^\T A^{-1}b + c = 0\\
    \Leftrightarrow &   - (\lambda^2 + 2\lambda) b^\T A^{-1}b + (1+\lambda)^2c = 0\\
    \Leftrightarrow &   (c -  b^\T A^{-1}b )\lambda^2 +  2(c -b^\T A^{-1}b )\lambda + c = 0\\
    \Leftrightarrow &   \lambda^2 +  2\lambda + \frac{\tilde{c}}{(\tilde{c} -  1 )} = 0\\
    \Leftrightarrow &   \lambda =  -1 \pm \sqrt{1 - \frac{\tilde{c}}{\tilde{c} -  1}}\\
    \Leftrightarrow &   \lambda =  -1 \pm \sqrt{\frac{1}{1-\tilde{c}}}.
\end{align*}
As $\lambda >0$ and $\tilde{c}>0$, the Lagrange multiplier is written as
\begin{align*}
\lambda =  - 1 + \sqrt{\frac{1}{1-\tilde{c}}}.
\end{align*}
Therefore, the optimal point $x^*$ is given by
\begin{align*}
    x^* &= \frac{\lambda}{1 + \lambda} A^{-1}b\\
    &=\frac{-1 + \sqrt{\frac{1}{1-\tilde{c}}}}{\sqrt{\frac{1}{1-\tilde{c}}}} A^{-1}b\\
    &= (1 - \sqrt{1-\tilde{c}})A^{-1}b.
\end{align*}
\end{proof}
Finally, we summarize three Lemmas~\ref{lem:QCQP_lemma1}-\ref{lem:QCQP_lemma3} and solve the QCQP problem~(\ref{Eq:QCQP}).
\setcounter{lemma}{0}
\begin{lemma}
The solution of the problem~(\ref{Eq:QCQP}) is given by
\begin{subequations}
\begin{align}
x^* &= \left(1 - \sqrt{\mathrm{C}\left(1 - \tilde{c};\tilde{k}_x^2,1 \right)} \right)A^{-1}b,\\
y^* &= \mathrm{R}\left(c -  (1 - \tilde{k}_x^2)b^\T A^{-1}b\right).
\end{align}\label{Eq:QCQP_prop}
\end{subequations}
\end{lemma}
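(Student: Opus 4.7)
The plan is to assemble the three preparatory Lemmas~\ref{lem:QCQP_lemma1}--\ref{lem:QCQP_lemma3} via case analysis, and then verify that the clamp $\mathrm{C}$ and ramp $\mathrm{R}$ expressions in~(\ref{Eq:QCQP_prop}) are exactly the compact encoding of the three cases that arise.

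First I would split into three regimes using the sign of $c$ and, when $c>0$, the position of $\tilde c$ relative to $1-\tilde k_x^2$: namely (i) $c\le 0$; (ii) $c>0$ and $\tilde c>1-\tilde k_x^2$; (iii) $0<\tilde c\le 1-\tilde k_x^2$. Lemma~\ref{lem:QCQP_lemma1} handles regime (i) directly, giving $(x^*,y^*)=(0,0)$. Lemma~\ref{lem:QCQP_lemma2} covers regime (ii) with the explicit formula $x^*=(1-\tilde k_x)A^{-1}b$, and reduces regime (iii) to the equality-constrained subproblem~(\ref{Eq:QCQP_lemma2}), whose solution $x^*=(1-\sqrt{1-\tilde c})A^{-1}b$ is provided by Lemma~\ref{lem:QCQP_lemma3}. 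In regimes (i) and (iii) the constraint is active on the boundary of the feasible set with $y^* = 0$, while in regime (ii) the companion identity $y^* = x^{*\T} A x^* - 2 b^\T x^* + c$ from Lemma~\ref{lem:QCQP_lemma1} applies; substituting $x^* = (1-\tilde k_x)A^{-1}b$ and simplifying yields $y^* = c - (1-\tilde k_x^2)\, b^\T A^{-1} b$.

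The next step is to verify that the closed-form expressions~(\ref{Eq:QCQP_prop}) reproduce all three cases. For $x^*$, the clamp argument $\mathrm{C}(1-\tilde c;\tilde k_x^2,1)$ saturates at the upper bound $1$ in regime (i) (since $1-\tilde c\ge 1$), so $1-\sqrt{1}=0$ and $x^*=0$; it saturates at the lower bound $\tilde k_x^2$ in regime (ii), producing $1-\tilde k_x$ and hence $x^*=(1-\tilde k_x)A^{-1}b$; and it takes the identity value $1-\tilde c$ in regime (iii), giving $(1-\sqrt{1-\tilde c})A^{-1}b$. For $y^*$, the ramp argument $c - (1-\tilde k_x^2)\,b^\T A^{-1} b$ is strictly positive in regime (ii) by the defining inequality of that regime, so $\mathrm{R}$ acts as the identity and returns the computed boundary value; in regimes (i) and (iii) the same argument is non-positive (respectively because $c\le 0$ and because $\tilde c\le 1-\tilde k_x^2$), so $\mathrm{R}$ outputs $0$, matching $y^*=0$.

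The main obstacle is essentially bookkeeping: correctly translating the dichotomies of Lemmas~\ref{lem:QCQP_lemma1} and~\ref{lem:QCQP_lemma2} into the saturation regimes of the clamp function, and confirming the algebraic simplification $x^{*\T} A x^* - 2b^\T x^* + c = c - (1-\tilde k_x^2)\, b^\T A^{-1} b$ at $x^*=(1-\tilde k_x)A^{-1}b$. There are no nontrivial optimization steps beyond those already carried out in the three preparatory lemmas; the work is purely to check that the formula aligns with the case analysis at each boundary and that $A^{-1}$ being positive definite guarantees $b^\T A^{-1} b \ge 0$, which is what legitimizes using $\tilde c$ with the correct sign throughout.
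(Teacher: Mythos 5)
Your proposal is correct and follows essentially the same route as the paper: assemble Lemmas~\ref{lem:QCQP_lemma1}--\ref{lem:QCQP_lemma3} by the case split on $c\le 0$, $\tilde c>1-\tilde k_x^2$, and $0<\tilde c\le 1-\tilde k_x^2$, then check that the clamp and ramp expressions encode the three branches. The only slight imprecision is calling the constraint ``active'' in regime (i) --- there $y^*=0$ because the unconstrained minimizer $(0,0)$ is already feasible, not because the constraint binds --- but this does not affect the conclusion.
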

\begin{proof}
Lemmas~\ref{lem:QCQP_lemma1}-\ref{lem:QCQP_lemma3} split the solution~$x^*$ as three cases:
\begin{align*}
x^* &:= 
\begin{cases}
(1 - \tilde{k}_x)A^{-1}b &   1 - \tilde{k}_x^2 <\tilde{c}\\
\left( 1 - \sqrt{1 - \tilde{c}}\right)A^{-1} b &  0< \tilde{c} \leq 1 - \tilde{k}_x^2\\
0 &  c\leq 0
\end{cases}\\
&:= 
\begin{cases}
\left(1 - \sqrt{\tilde{k}_x^2} \right)A^{-1}b &      1 -\tilde{c} < \tilde{k}_x^2\\
\left( 1 - \sqrt{1 - \tilde{c}}\right)A^{-1} b &  \tilde{k}_x^2 \leq   1- \tilde{c} < 1 \\
\left(1 - \sqrt{1}\right)A^{-1} b &  1 \leq 1- \tilde{c}
\end{cases}\\
&= \left(1 - \sqrt{\mathrm{C}\left(1 - \tilde{c};\tilde{k}_x^2,1 \right)} \right)A^{-1}b.
\end{align*}
Furthermore, the solution of $y^*$ is written as,
\begin{align*}
y^* &:= 
\begin{cases}
x^{*\T} A x^* - 2b^\T x^* + c &   1 - \tilde{k}_x^2 <\tilde{c}\\
x^{*\T} A x^* - 2b^\T x^* + c &  0< \tilde{c} \leq 1 - \tilde{k}_x^2\\
0 &  c\leq 0
\end{cases}\\
&=
\begin{cases}
c -  (1 - \tilde{k}_x^2)b^\T A^{-1}b &   1 - \tilde{k}_x^2 <\tilde{c}\\
0 &  0< \tilde{c} \leq 1 - \tilde{k}_x^2\\
0 &  \tilde{c}\leq 0
\end{cases}\\
&=  \mathrm{R}(c -  (1 - \tilde{k}_x^2)b^\T A^{-1}b).
\end{align*}
Therefore, the solution become Eq.~(\ref{Eq:QCQP_prop}).
\end{proof}

\section{Overall schematic of the learning process}\label{APP4}

Algorithm \ref{alg:a1} shows the overall schematic of the learning process.
The first line defines the modified dynamics $(\fm,\Gm,\hm)$ from the nominal dynamics $(\fn,\Gn,\hn)$, defined by the neural network, where $\phi$ is a set of parameters of the nominal dynamics.
The 2-7 line represents a training loop, where the gradient-based optimization methods can be used by using the forward and backward calculation.
Note that an ODE solver is used for forward calculation,
and Algorithm \ref{alg:a2} shows the forward calculation when the Euler method is used.
For simplicity, mini-batch computation omitted in this schematic.

 \begin{algorithm}
 \caption{Training process}
 \label{alg:a1}
 \begin{algorithmic}[1]
 \renewcommand{\algorithmicrequire}{\textbf{Input:}}
 \renewcommand{\algorithmicensure}{\textbf{Output:}}
 \REQUIRE $x_0$: initial state, $u$: input signal,$y$: output signal,   $(\fn,\Gn,\hn)$: nominal dynamics, $V$: a designed function
  \STATE define modified functions $(\fm,\Gm,\hm)$ from $(\fn,\Gn,\hn) $ and $V$
  \FOR {$1$ to $\#$iterations}
    \STATE $\hat{y} \leftarrow$ ODE with $(\fm,\Gm,\hm)$ from $x_0, u$ ({\bf Algorithm~2})
    \STATE forward computation of ${\rm Loss}$ function (\ref{Eq:HJ_Loss}) from $y$
    \STATE $\nabla_\phi {\rm Loss} \leftarrow $ backward computation with ${\rm Loss}$
    \STATE $\phi \leftarrow$ Optimizer($\phi$, $\nabla_\phi {\rm Loss}$)
  \ENDFOR
 \end{algorithmic} 
 \end{algorithm}

\begin{algorithm}
 \caption{Forward computation for dynamics Eq. (1)}
 \label{alg:a2}
 \begin{algorithmic}[1]
 \renewcommand{\algorithmicrequire}{\textbf{Input:}}
 \renewcommand{\algorithmicensure}{\textbf{Output:}}
 \REQUIRE $x_0$: initial state, $u$: input signal, $(\fm,\Gm,\hm)$: dynamics 
 \ENSURE  $\hat{y}$: output signal \\
  \FOR {t $\leftarrow 0$ to $T$}
    \STATE $x_{t+1} \leftarrow x_t + \Delta t (\fm(x_t)+\Gm(x_t)u_t)$
    \STATE $\hat{y}_{t} \leftarrow \hm(x_{t})$
  \ENDFOR
 \RETURN $\hat{y}$ 
 \end{algorithmic} 
\end{algorithm}

\section{Neural network architecture and hyper parameters}

\begin{table}[t]
    \centering
    \caption{The search space of Bayesian optimization}
    \begin{tabular}{|l|c|c|} \hline
parameter name & range    & type\\ \hline \hline
learning rate & $10^{-5}$  -- $10^{-3}$ & log scale \\ \hline
weight decay  & $10^{-10}$  -- $10^{-6}$ & log scale \\ \hline
batch size    & $ 10   $ -- $100 $ & integer \\ \hline
optimizer     & $\{$ AdamW, Adam, RMSProp$\}$  & categorical\\ \hline
$\#$layer for $\fn$ & $0$ -- $3$ & integer \\ \hline
$\#$layer for $\Gn$ & $0$ -- $3$ & integer\\ \hline
$\#$layer for $\hn$ & $0$ -- $3$ & integer\\ \hline
$\#$dim. for a hidden layer of $\fn$ & $8 - 32$ & integer\\ \hline
$\#$dim. for a hidden layer of $\Gn$ & $8 - 32$ & integer \\ \hline
$\#$dim. for a hidden layer of $\hn$ & $8 - 32$ & integer \\ \hline
$\epsilon$  & $0$ -- $1.0 $ & log scale \\ \hline
Initial scale parameter for $\fn$  & $10^{-5}$ -- $0.1 $ & log scale \\ \hline
Stop gradient for projection  &  {true, false} & boolean\\ \hline
    \end{tabular}
    \label{tab:bo}
\end{table}

This section details how to determine the neural network architecture.
The architecture and hyper parameters of the neural networks were basically determined by Bayesian optimizing using the validation dataset.

Table \ref{tab:bo} shows the search space of Bayesian optimization.
The first three parameters: learning rate, weight decay, and batch size are parameters for training the neural networks.
Also, an optimizer is selected from AdamW, Adam, and RMSProp.
The structure of neural network is determined from the number of intermediate layers and  dimensions for each layer.
One layer in our setting consists of a fully connected layer with a ReLU activation.
The last three rows represent parameters related to our proposed methods.
$\epsilon$ is a parameter of the loss function $L_{{\rm HJ}}$.
Initial scale parameter is multiplied with the output of $\fn$ to prevent the value of $\fn(x)$ from becoming large in the initial stages of learning.
When $\fn(x)$, which determine the behavior of the internal system, outputs a large value, it diverges due to time evolution, and the learning of the entire system may not progress. 
Therefore, it is empirically preferable to start with a small value for $\fn(x)$ at the initial stage of learning.
When the flag of `stop gradient for projection' is false,
backward computation related to the second term of modification of $\fm$ and $\Gm$ is disabled.
Note that modification related to $\fm$ and $\Gm$ consists of two terms (see Theorem 1).
Setting this parameter to false resulted in better performance in our all experiments.

We ran 300 trials using the Bayesian optimization for the bistable model benchmark and the glucose insulin benchmark with the above settings, and the hyper parameters obtained are shown in Table \ref{tab:hparam}, where the number of dimensions for each hidden layer is shown in tuple from the order closest to the input layer.

Hyperparameters of comparative methods were determined by grid search using the validation dataset.
ARX and PWARX have an order parameter $n$ of the autoregressive model, and this parameter is searched in the range of $1-5$.
The number of iterations was set to 10000 so that the optimization of PWARX converges sufficiently.
MOESP and ORT have an internal dimension $n$ ($1 \leq  n \leq 20$)
and the number of subsequences used for estimation $k$  ($2n < k \leq 20$).

\begin{table}[t]
    \centering
    \caption{Selected parameters for each benchmark}
    \begin{tabular}{|l|c|c|} \hline
parameter name & bistable    & glucose insulin\\ \hline \hline
learning rate & $3.01\times 10^{-4}$ & $3.28\times 10^{-4}$ \\ \hline
weight decay  & $4.76\times 10^{-9}$ & $2.28\times 10^{-9}$ \\ \hline
batch size    & $100 $ & $100$ \\ \hline
optimizer     &  RMSProp  & RMSProp\\ \hline
$\#$layer for $\fn$ & $3$ & $1$ \\ \hline
$\#$layer for $\Gn$ & $1$ & $2$ \\ \hline
$\#$layer for $\hn$ & $3$ & $2$ \\ \hline
$\#$dim. for a hidden layer of $\fn$ & (17,10,22) &  (8)\\ \hline
$\#$dim. for a hidden layer of $\Gn$ & (34)       &  (27,29) \\ \hline
$\#$dim. for a hidden layer of $\hn$ & (10,62,58) &  (35,18) \\ \hline
$\epsilon$  & $0.63$ & $0.75$ \\ \hline
Initial scale parameter for $\fn$  & $9.64 \times 10^{-2}$ & $8.94 \times 10^{-2}$ \\ \hline
Stop gradient for projection  &  false & false \\ \hline
    \end{tabular}
    \label{tab:hparam}
\end{table}

\newpage
\section{Glucose-Insulin system}

Glucose concentration in the blood is modeled as a time-delay system regulated by insulin concentration (See Fig.~\ref{fig:GIR} (A)) \cite{DEGAETANO200041}.
Suppose that $G$ $I$, and $X$ are the glucose, insulin, and accumulated glucose plasma concentration  ([mg/100ml],[$\mu$UI/ml],and [min mg/100ml], respectively) and $u$ is the amount of ingested glucose per minute [min${}^{-1}$ mg/100ml].
The dynamics of each concentration is given by 
\begin{align*}
\dot{G}(t) &= - k_1 G(t) - k_2 G(t) I(t) + g_0 + u(t),\\
\dot{I}(t) &= - k_3 I(t) + \frac{k_4}{\tau} X(t),\\
\dot{X}(t) &= G(t) - G(t-\tau),\\
y(t) &= [G(t),I(t)]^{\mathrm{T}},
\end{align*}
where $k_1$ is a spontaneous glucose disappearance rate, $k_2$ is an insulin-dependent glucose disappearance rate, $g_0$ is a constant increase in plasma glucose concentration, $k_3$ is an insulin disappearance rate, $k_4$ is an insulin release rate per the average glucose concentration within the last $\tau$ minute.

This system has a unique asymptotically stable equilibrium point $(G, I, X) \equiv (G^*, I^*, X^*)$ on the nonlinear plain such that
\begin{align*}
G^* &= \frac{-k_1k_3 + \sqrt{(k_1k_3)^2 + 4k_2k_3k_4g_0}}{2k_2k_4},\\
I^* &= \frac{-k_1k_3 + \sqrt{(k_1k_3)^2 + 4k_2k_3k_4g_0}}{2k_2k_3},\\
X^* &= \frac{-k_1k_3 + \sqrt{(k_1k_3)^2 + 4k_2k_3k_4g_0}}{2k_2k_4} \tau.
\end{align*}
Here we set the initial state of this model as $G(0) = G^*,I(0) = I^*, X(0) = X^*$ and set the model parameters as shown in the following Table~\ref{tab:GIS}.
Furthermore, we adopt the output of the previous oral glucose absorption system~\cite{DallaMan2007} as $u$ (See Fig.~\ref{fig:GIR} (B)), and the glucose absorption amount $u$ is normalized based on human blood volume per body weight (0.80[100 ml/kg]). 

\begin{table}[t]
    \centering
    \caption{Model parameters.}
    \begin{tabular}{|c|c|c|}
    \hline
    {\bf Parameter} &{\bf Value}&{\bf Unit} \\\hline\hline
    $k_{1}$ &  $3.35\times 10^{-2}$ & $\frac{1}{\text{min}}$ \\ \hline
    $k_{2}$ &  $5.22\times 10^{-5}$ & $\frac{1}{\text{min}(\mu \text{UI/ml})}$\\ \hline
    $k_{3}$ &  $1.055$ & $\frac{1}{\text{min}}$\\ \hline
    $k_{4}$ &  $0.293$ & $\frac{ (\mu \text{UI/ml})}{\text{min} \text{(mg/100ml)}}$  \\ \hline
    $g_0$ &  3.13 & $\frac{\text{(mg/100ml)}}{\text{min}}$ \\ \hline
    $\tau$ & $6$ & min \\ \hline
    \end{tabular}
    \label{tab:GIS}
\end{table}

\begin{figure}[t]
    \centering
    \begin{tabular}{cc}
    \includegraphics[width=0.45\linewidth]{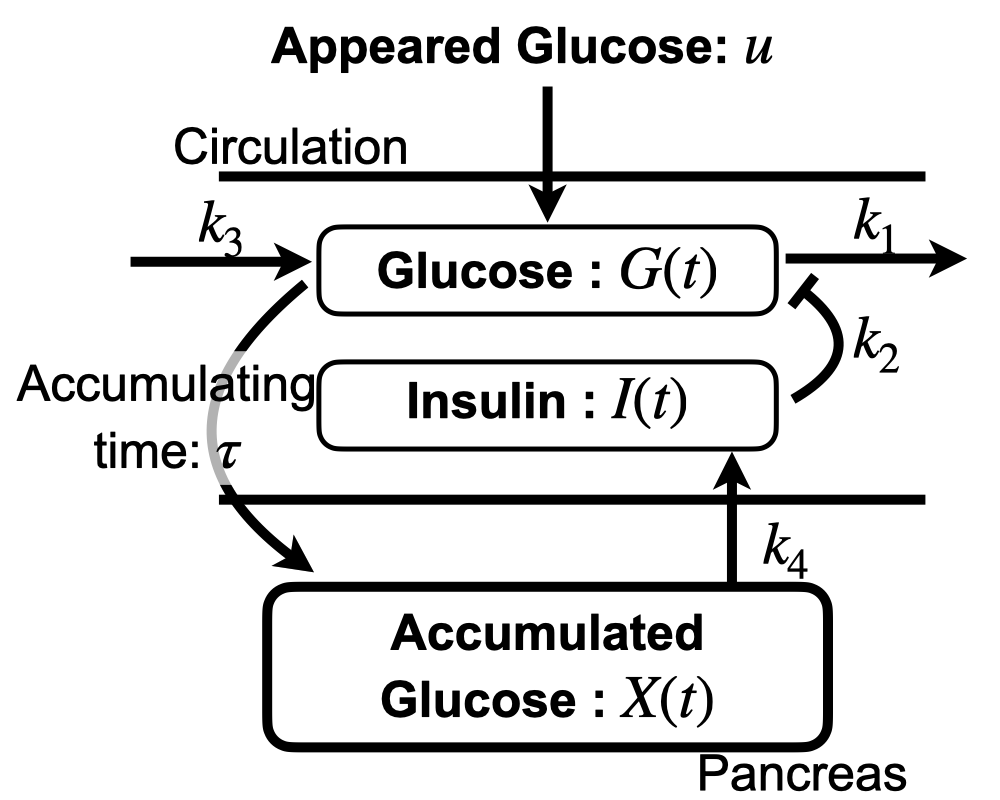}     &  \includegraphics[width=0.45\linewidth]{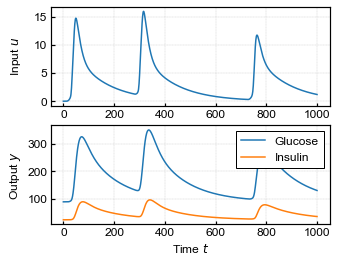}\\
    \text{(A)} & \text{(B)}
    \end{tabular}
    \caption{(A) Overview and  (B) input and output behavior of the glucose insulin system.}
    \label{fig:GIR}
\end{figure}

\end{document}